\documentclass{article}
\usepackage{graphicx} % Required for inserting images
\usepackage{amsmath}
\usepackage[margin=0.8in]{geometry}
\usepackage{amsthm}
\usepackage{comment}
\usepackage{amssymb}
\usepackage{authblk}
\usepackage{mathtools}
\usepackage[style=alphabetic]{biblatex}
\DeclareMathOperator{\Hom}{Hom}
\DeclareMathOperator{\End}{End}
\DeclareMathOperator{\rank}{rank}
\DeclareMathOperator{\Sym}{Sym}

\newtheorem{lemma}{Lemma}
\newtheorem{theorem}{Theorem}

\newtheorem*{remark}{Remark}
\newtheorem*{theoremA}{Theorem A}
\newtheorem*{theoremB}{Theorem B}

\newcommand\restr[2]{{% we make the whole thing an ordinary symbol
  \left.\kern-\nulldelimiterspace % automatically resize the bar with \right
  #1 % the function
  \vphantom{\big|} % pretend it's a little taller at normal size
  \right|_{#2} % this is the delimiter
  }}

\title{Commuting varieties in bad characteristic}
\author{Vlad Roman}
\date{\today}

\begin{document}

\maketitle

\begin{abstract}
    Let $k$ be an algebraically closed field of characteristic $2$. We consider the commuting variety and the commuting nilpotent variety of the Lie algebra $\mathfrak{sp}_{2n}$, namely the sets $\mathcal{C}_2(\mathfrak{sp}_{2n})=\{ (x,y) \in \mathfrak{sp}_{2n} \times \mathfrak{sp}_{2n} \mid [x,y]=0\}$ and $\mathcal{C}_2^{\text{nil}}(\mathfrak{sp}_{2n})=\{ (x,y) \in \mathfrak{sp}_{2n} \times \mathfrak{sp}_{2n} \mid x,y \text{ nilpotent, } [x,y]=0\}$ and prove that they are both irreducible, of dimensions $\dim(\mathfrak{sp}_{2n}) + 2n$ and $\dim(\mathfrak{sp}_{2n}) + n-1$, respectively.
\end{abstract}

\section{Introduction}
For $k$ an algebraically closed field and $\mathfrak{g}$ a Lie algebra over $k$, we define
$\mathcal{C}_2(\mathfrak{g})=\{ (x,y) \in \mathfrak{g} \times \mathfrak{g} \mid [x,y]=0\}$,
the commuting variety of $\mathfrak{g}$. The investigation of these varieties begins with a paper of Motzkin and Taussky \cite{Motzkin-Taussky}, in which they prove that $\mathcal{C}_{2}(\mathfrak{gl_n})$ is is an irreducible variety of dimension $n^2+n$ (for $k$ of any characteristic). At a later date, Richardson \cite{Richardson} proved that with the assumption that $char(k)=0$ and $\mathfrak{g}$ is reductive, $\mathcal{C}_2(\mathfrak{g})$ is irreducible of dimension $\dim(\mathfrak{g}) + \rank(\mathfrak{g})$, where $\rank(\mathfrak{g})$ is the dimension of a maximal toral subalgebra of $\mathfrak{g}$. Following this, Levy \cite{Levy} extends the result to a field $k$ of good characteristic, that is $char(k) \neq 2$ for corresponding Lie groups of type $B, C, D$, $char(k) \neq 2,3$ for $G_2, F_4, E_6, E_7$ and $char(k) \neq 2, 3, 5$ for $E_8$. There is not much study in the case of bad characteristic. In type $A$, the author \cite{mypaper} proves that in not very good characteristic (meaning $char(k)=p$ with $p \mid n$ for $\mathfrak{gl_n}$), there are two irreducible components, giving the dimension of each component as well. 

Let $k$ be an algebraically closed field of characteristic $2$ and $V$ be a finite-dimensional vector space of dimension $2n$ over $k$. Let $G=Sp(V)$ be the group of transformations which preserve a non-degenerate (skew-)symmetric bilinear form on $V$ and let $\mathfrak{sp}_{2n}=\mathfrak{sp}(V)$ be the Lie algebra of $G$. We investigate the commuting variety of $\mathfrak{sp}_{2n}$. In contrast to the case of good characteristic (i.e. $char(k) \neq 2$ for type $C$), the structure of centralizers in the group of (nilpotent) elements $e \in \mathfrak{sp}(V)$ is more subtle in characteristic $2$. The monograph by Liebeck and Seitz \cite{liebeck-seitz} gives the precise structure of the centralizers $C_G(e)$ and describes the indecomposable summands of $e$, also providing formulas for the dimensions of the centralizers using these indecomposables. However, they do not provide the dimensions of the centralizers in the Lie algebra of the possible indecomposables, which is a key ingredient in the proof of the irreducibility of $\mathcal{C}_2(\mathfrak{sp}_{2n})$. These dimensions (or upper bounds for them) are computed in section \ref{centralizers}. Using these, we obtain the following results:

\begin{theoremA}
    Over an algebraically closed field $k$ of characteristic $2$, $\mathcal{C}_2(\mathfrak{sp}_{2n})$ is an irreducible variety of dimension $\dim(\mathfrak{sp}_{2n}) + 2n$.
\end{theoremA}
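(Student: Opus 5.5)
We follow the Richardson--Levy strategy: show that $\mathcal{C}_2(\mathfrak{sp}_{2n})$ is the closure of $G\cdot(\mathfrak{h}\times\mathfrak{h})$ for a Cartan subalgebra $\mathfrak{h}$ of dimension $n$, or rather the closure of the set of pairs whose first coordinate is regular semisimple. We cannot use regular semisimple elements naively, because in characteristic $2$ the centralizer in $\mathfrak{sp}_{2n}$ of a regular semisimple $x\in\mathfrak{h}$ is not just $\mathfrak{h}$. Indeed, $\mathfrak{sp}_{2n}$ in characteristic $2$ corresponds to symmetric matrices $Jx$, and the long root spaces behave differently: $x=\mathrm{diag}(t_1,\dots,t_n,t_1,\dots,t_n)$ has $t_i=-t_i$. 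So the generic centralizer has dimension $2n$, which explains the $+2n$ in the dimension formula. For $x$ generic, $\mathfrak{c}(x)$ is a $2n$-dimensional commutative subalgebra $\mathfrak{c}$, namely $\bigoplus_i \mathfrak{gl}$-blocks of the form $\begin{pmatrix}a_i&b_i\\ c_i&a_i\end{pmatrix}\cap\mathfrak{sp}$. We will check its dimension directly.

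The first step is to set up the generic component. Let $U\subset\mathfrak{sp}_{2n}$ be the open dense set of $x$ with $\dim\mathfrak{c}_{\mathfrak{sp}}(x)=2n$, which is the minimal possible value. Define $\mathcal{C}^{0}=\{(x,y):x\in U,\ [x,y]=0\}$. This is a vector bundle over the irreducible set $U$ with fibre dimension $2n$, so $\overline{\mathcal{C}^0}$ is irreducible of dimension $\dim\mathfrak{sp}_{2n}+2n$. It remains to show that every commuting pair $(x,y)$ lies in $\overline{\mathcal{C}^0}$.

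The second step is a reduction to nilpotent pairs via the Jordan decomposition. Given $(x,y)$, write $x=x_s+x_n$. The centralizer $\mathfrak{l}=\mathfrak{c}(x_s)$ is a product of $\mathfrak{gl}_{m_i}$'s and one $\mathfrak{sp}_{2m}$, where the latter comes from the eigenvalue $0$; since $-\lambda=\lambda$ the pairing of eigenvalues is trivial. Moreover $(x,y)\in\mathcal{C}_2(\mathfrak{l})$. We argue by induction on $n$. If $x_s$ is not central in the $\mathfrak{sp}$ part, then $\mathcal{C}_2(\mathfrak{l})$ is irreducible by induction together with Motzkin--Taussky for the $\mathfrak{gl}$ factors. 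Since $\mathfrak{l}$ has the same generic centralizer dimension, its generic pairs are limits of pairs in $\mathcal{C}^0$, and thus $(x,y)\in\overline{\mathcal{C}^0}$. By symmetry in $x,y$, it remains to handle the case where both $x$ and $y$ are nilpotent plus scalar; subtracting scalars, we may take $x$ and $y$ both nilpotent.

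The third step is a dimension count, and this is the main obstacle. We decompose $\mathcal{C}^{\mathrm{nil}}_2$ over nilpotent orbits $\mathcal{O}_e$ using the Liebeck--Seitz classification. The preimage of $\mathcal{O}_e$ has dimension $\dim\mathcal{O}_e+\dim\mathfrak{c}(e)\cap\mathcal{N}\le \dim\mathfrak{sp}_{2n}$ plus a correction term. We must show $\dim(\mathcal{C}^{\mathrm{nil}}_2)<\dim\mathfrak{sp}_{2n}+2n$, and more precisely that every irreducible component of $\mathcal{C}_2$ contained in the set of pairs that are nilpotent modulo scalars has dimension less than $\dim\mathfrak{sp}_{2n}+2n$. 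Since each component of $\mathcal{C}_2$ has dimension at least $\dim\mathfrak{sp}_{2n}+2n$ (it is defined by at most $\dim\mathfrak{sp}_{2n}-2n$ effective equations, or one argues via upper semicontinuity), this forces containment in $\overline{\mathcal{C}^0}$. To get the bound, we use the centralizer dimensions of the indecomposable summands $W(m)$ and $V(2m)$ computed in Section~\ref{centralizers}, summing over orbits to show $\dim\mathcal{O}_e+\dim\mathfrak{c}(e)\le\dim\mathfrak{sp}_{2n}+n-1+(\text{small})$. This is exactly the statement $\dim\mathcal{C}^{\mathrm{nil}}_2=\dim\mathfrak{sp}_{2n}+n-1$, which we would establish in parallel (Theorem B). The delicate part is the lower bound on component dimensions in bad characteristic. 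We would try to avoid it instead: show directly that for each nilpotent $e$ there is a deformation $e+tz$, with $z$ a generic element of the center of $\mathfrak{c}(e)$, or of a toral part of it, that moves $(e,y)$ into the non-nilpotent locus handled in the second step. This would be an explicit construction on each indecomposable $W(m)$ and $V(2m)$, in the same way Levy handles distinguished orbits.
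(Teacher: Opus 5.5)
Your overall plan matches the paper's: build an irreducible closed set of dimension $\dim\mathfrak{sp}_{2n}+2n$, induct through $\mathfrak c(x_s)$, and then run a dimension count on the locus $\mathcal D$ of pairs that are nilpotent modulo scalars. The gap is that the ingredient which makes the dimension count conclusive is missing. The bound $\dim\mathcal D\le\dim\mathfrak{sp}_{2n}+n+1$ only excludes a component contained in $\mathcal D$ if \emph{every} irreducible component of $\mathcal C_2(\mathfrak{sp}_{2n})$ has dimension at least $\dim\mathfrak{sp}_{2n}+2n$. You do not prove this. The count of ``$\dim\mathfrak{sp}_{2n}-2n$ effective equations'' is asserted without any reason. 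Upper semicontinuity of fibre dimensions gives no lower bound on components. You then say you would avoid the bound altogether.

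The missing idea is Lemma~\ref{lowerbound}. In characteristic $2$, elements of $\mathfrak{sp}_{2n}$ are self-adjoint for a form that is also symmetric, so $\langle[A,B]v,v\rangle=\langle Bv,Av\rangle+\langle Av,Bv\rangle=0$. Hence $(x,y)\mapsto[x,y]$ lands in $\mathfrak{so}_{2n}$, which has dimension $n(2n-1)=\dim\mathfrak{sp}_{2n}-2n$. The fibre dimension theorem applied to the fibre over $0$ then gives the bound. The same fact, $\rank\operatorname{ad}x\le n(2n-1)$, is also what justifies your unproved Step~1 claim that $2n$ is the minimal centralizer dimension.

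Your proposed workaround cannot replace this lemma.
\begin{itemize}
\item Let $z\in\mathfrak z(\mathfrak c(e))$. Then $z_s\in k[z]$ is again self-adjoint and centralizes $\mathfrak c(e)$. If $z_s$ were non-scalar, $\mathfrak c(e)$ would preserve its orthogonal, $e$-stable eigenspaces. That contradicts the off-diagonal elements built from $\Hom_e(V_\mu,V_\lambda)\neq0$, as in the proof of Lemma~\ref{centralizer_in_Lie_algebra}.
\item Consequently $(e+tz)_s=tz_s$ is scalar, and $(e+tz,y)$ never leaves $\mathcal D$.
\item If instead you take a toral $z\in\mathfrak c(e)$ that does not commute with $y$, you destroy $[e+tz,y]=0$.
\item Worse, suppose $V\downarrow e=W(n)$. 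Realize $V=k^n\otimes k^2$ with $e=J_n\otimes1$. Then $\mathfrak c(e)=k[J_n]\otimes\mathfrak{sp}_2$, since both are $3n$-dimensional, and every element of it is scalar plus nilpotent. So there is no semisimple direction to deform along at all. This is exactly why the paper argues by dimension.
\end{itemize}

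There are also three smaller errors.
\begin{itemize}
\item In Step~2, $\lambda+\mu\ne0$ for $\lambda\ne\mu$ in characteristic $2$. So the eigenspaces of $x_s$ are mutually orthogonal and nondegenerate, and $\mathfrak c(x_s)=\bigoplus_\lambda\mathfrak{sp}(V_\lambda)$ has no $\mathfrak{gl}$ factors. With $\mathfrak{gl}$ factors, your claim that $\mathfrak l$ has the same generic centralizer dimension would be false. With the correct $\mathfrak l$ the induction goes through.
\item The estimate Step~3 needs is not Theorem~B. It is $\dim\mathcal O_e+\dim\mathfrak c(e)=\dim\mathfrak{sp}_{2n}+\Delta(e)\le\dim\mathfrak{sp}_{2n}+n$ from Lemma~\ref{discrepancy}, plus $1$ for the scalar. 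That lemma also requires the $W_\ell(m)$ summands, which you omit.
\item The resulting comparison $n+1<2n$ fails for $n=1$. There $\mathfrak{sp}_2=kI+\mathcal N$, so the base case must be checked directly. For example, $[X,Y]=(bc'+b'c)I$ for $X=\left(\begin{smallmatrix}a&b\\ c&a\end{smallmatrix}\right)$ and $Y=\left(\begin{smallmatrix}a'&b'\\ c'&a'\end{smallmatrix}\right)$. Hence $\mathcal C_2(\mathfrak{sp}_2)\cong k^2\times\{bc'+b'c=0\}$, which is irreducible of dimension $5$.
\end{itemize}
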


\begin{theoremB}
    Over an algebraically closed field $k$ of characteristic $2$, $\mathcal{C}_2^{\text{nil}}(\mathfrak{sp}_{2n})$ is an irreducible variety of dimension $\dim(\mathfrak{sp}_{2n}) + n-1$.
\end{theoremB}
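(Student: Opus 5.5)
The plan is to stratify $\mathcal{C}_2^{\text{nil}}(\mathfrak{sp}_{2n})$ by the $G$-orbit of the first coordinate. For each nilpotent orbit $\mathcal{O}=G\cdot e$, the first projection restricted to the stratum $\mathcal{C}_{\mathcal{O}}=\{(x,y)\in\mathcal{C}_2^{\text{nil}} \mid x\in\mathcal{O}\}$ has fibre over $e$ equal to $\mathfrak{c}_{\mathfrak g}(e)\cap\mathcal N$. Hence
\[
\dim \mathcal{C}_{\mathcal O} = \dim G - \dim C_G(e) + \dim(\mathfrak{c}_{\mathfrak g}(e)\cap\mathcal N).
\]
There are finitely many nilpotent orbits, because the Liebeck--Seitz classification by indecomposable summands gives finitely many classes. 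So $\mathcal{C}_2^{\text{nil}}$ is a finite union of these strata, and it suffices to identify one large irreducible stratum and to show that every other stratum lies in its closure.

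For the regular nilpotent $e_{\mathrm{reg}}$ we have $\dim C_G(e_{\mathrm{reg}})=n$. Using the computation of $\mathfrak{c}_{\mathfrak g}(e)$ for the indecomposable summands in Section~\ref{centralizers}, I expect $\dim\mathfrak{c}_{\mathfrak g}(e_{\mathrm{reg}})=2n$, with nilpotent part $\mathfrak{c}_{\mathfrak g}(e_{\mathrm{reg}})\cap\mathcal N$ a linear subspace of dimension $2n-1$. The reason is that $\mathfrak{c}_{\mathfrak g}(e_{\mathrm{reg}})$ should be commutative, spanned by suitable polynomials in $e_{\mathrm{reg}}$ together with extra characteristic-$2$ elements, and the non-nilpotent directions should be cut out by a single linear condition. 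Granting this, the stratum $\mathcal C_{\mathrm{reg}}$ is the image of the irreducible variety $G\times(\mathfrak{c}_{\mathfrak g}(e_{\mathrm{reg}})\cap\mathcal N)$. It is therefore irreducible of dimension $\dim\mathfrak g-n+2n-1=\dim\mathfrak g+n-1$, which matches the claimed dimension. Next, from the upper bounds on the centralizers of the indecomposables in Section~\ref{centralizers}, I would check that every non-regular orbit satisfies $\dim\mathcal O+\dim\mathfrak{c}_{\mathfrak g}(e)<\dim\mathfrak g+n-1$. Together with irreducibility this gives the dimension statement. The check should reduce to additivity of the centralizer formulas over orthogonal sums plus the cross terms $\Hom$ between summands.

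The main obstacle is irreducibility, i.e.\ showing $\mathcal C_{\mathcal O}\subseteq\overline{\mathcal C_{\mathrm{reg}}}$ for every $\mathcal O$. A dimension count alone cannot do this, since there is no useful lower bound on the dimension of the components of $\mathcal C_2^{\text{nil}}$. I would argue by induction on $n$. First I would use the symmetry $(x,y)\mapsto(y,x)$ and the fact that $x+ty$ is nilpotent and commutes with $y$ for commuting nilpotents. This replaces $x$ by a generic element of the span $\langle x,y\rangle$, so the first coordinate may be assumed to be a maximal-orbit element there.

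If $x$ is decomposable, with $V=V_1\perp V_2$ both $x$- and $y$-stable, then induction puts $(x,y)$ in the closure of pairs $((e_1,e_2),(y_1,y_2))$ with $e_i$ regular in $\mathfrak{sp}(V_i)$. An explicit one-parameter deformation should then move such a pair into $\mathcal C_{\mathrm{reg}}$. Concretely, I would add $t\,z$ to the first coordinate and $t\,w$ to the second, where $z$ links the two Jordan blocks and $w$ is chosen so that the pair still commutes, and then let $t\to0$.

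The remaining case is $x$ indecomposable but not regular, among the characteristic-$2$ indecomposables $W(m)$ and $V(2m)$ of Liebeck--Seitz. There I would try to show directly that the generic element of $\mathfrak{c}_{\mathfrak g}(x)\cap\mathcal N$ lies in a strictly larger orbit, so that swapping coordinates increases the orbit. Since there are finitely many orbits, repeating this terminates in $\mathcal C_{\mathrm{reg}}$.
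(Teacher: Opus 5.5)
Your treatment of the regular stratum is correct. In fact $\mathfrak c_{\mathfrak g}(e_{\mathrm{reg}})=k[e_{\mathrm{reg}}]$ exactly, since in characteristic $2$ every polynomial in $e_{\mathrm{reg}}$ lies in $\mathfrak{sp}_{2n}$, and its nilpotent part is $\{p(e_{\mathrm{reg}}):p(0)=0\}$. So $\overline{\mathcal C_{\mathrm{reg}}}$ is an irreducible subvariety of dimension $\dim\mathfrak g+n-1$. But irreducibility, which is the substance of the theorem, is not proved.

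Your case split (a splitting $V=V_1\perp V_2$ stable under both $x$ and $y$, versus $x$ indecomposable) omits pairs for which $x$ decomposes but the pair does not. These are the generic points of non-regular strata. For instance, let $x$ have type $V(4)\perp V(2)$ in $\mathfrak{sp}_6$.
\begin{itemize}
\item $\mathfrak c_{\mathfrak g}(x)\cap\mathcal N$ is the linear space of centralizing elements with zero constant term on both diagonal blocks.
\item No element of it is regular: a nilpotent commuting with a regular $z$ is a polynomial in $z$, and such polynomials never have Jordan type $(4,2)$.
\item A generic $y$ in it again has type $V(4)\perp V(2)$, and hence so does a generic element of $\langle x,y\rangle$.
\item Yet the off-diagonal part of such a $y$ sends the generator of $V(4)$ to a vector with nonzero component along the generator of $V(2)$. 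So $V$ is a cyclic, hence indecomposable, $k[x,y]$-module.
\end{itemize}
After your reduction to a maximal element of the span, such a pair falls into neither of your cases. The pairs that do admit a common splitting lie in a proper closed subset of the stratum, so handling them says nothing about the rest.

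The steps you do list also remain intentions. The deformation $(e+tz,y+tw)$ in the split case, which is where the real content lies, is never constructed. In the indecomposable case the list should be $W(n)$ and $W_\ell(n)$, since $V(2n)$ is regular. There, knowing that a generic $y'\in\mathfrak c_{\mathfrak g}(x)\cap\mathcal N$ lies in a larger orbit controls an arbitrary pair $(x,y)$ only if $\mathfrak c_{\mathfrak g}(x)\cap\mathcal N$ is irreducible, or if the claim is proved on each of its components.

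The missing idea is the one you explicitly dismiss: a lower bound on the dimension of every component. The paper obtains it inside the Siegel parabolic $\mathfrak p$.
\begin{itemize}
\item The projection $\mathcal C_2^{\text{nil}}(\mathfrak p)\to\mathcal C_2^{\text{nil}}(\mathfrak{gl}_n)$ lands in a variety that is irreducible of dimension $n^2-1$ by Baranovsky and Premet.
\item Its fibres are $\ker F_{A_1,A_2}$. In characteristic $2$, $F_{A_1,A_2}$ takes values in the $n(n-1)/2$-dimensional space of symmetric matrices with zero diagonal, so the fibres have dimension at least $n(n+3)/2$.
\item Conjugating by the opposite unipotent radical adds $n(n+1)/2$, giving $\dim\mathfrak g+n-1$. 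The discrepancy bound matches this from above.
\end{itemize}
With such a lower bound, irreducibility reduces to showing $\dim\mathcal C_{\mathcal O}<\dim\mathfrak g+n-1$ for every non-regular $\mathcal O$.

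Your inequality is meant to do this, but it is false as written. We have $\dim\mathcal O+\dim\mathfrak c_{\mathfrak g}(e)=\dim\mathfrak g+\Delta(e)$, and Lemma~\ref{discrepancy} gives $\Delta(e)=n$ whenever all summands of $e$ are of type $V(2a_i)$, e.g.\ for $V(2n-2)\perp V(2)$. Even with $\mathfrak c_{\mathfrak g}(e)\cap\mathcal N$ in place of $\mathfrak c_{\mathfrak g}(e)$, Section~\ref{centralizers} together with $I\notin\mathcal N$ only gives $\dim\mathcal C_{\mathcal O}\le\dim\mathfrak g+\Delta(e)-1\le\dim\mathfrak g+n-1$.

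For the non-regular orbits with $\Delta(e)=n$ you need, in addition, that $\mathfrak c_{\mathfrak g}(e)\cap\mathcal N$ has codimension at least $2$ in $\mathfrak c_{\mathfrak g}(e)$. This does hold. For nilpotent $y$, the endomorphism induced on $V/eV$ has vanishing trace and vanishing second characteristic coefficient. These two conditions already cut out codimension $2$ on $\bigoplus_i k[e_i]$. The paper's write-up also leaves this step implicit.
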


\section{Centralizers in the group vs. centralizers in the Lie algebra}\label{centralizers}
Let $k$ be an algebraically closed field of characteristic $2$. Let $V$ be the natural module on which elements of $\mathfrak{sp}_{2n}$ act. According to \cite{liebeck-seitz}, $V$ decomposes (for $x \in \mathfrak{sp}_{2n}$ nilpotent) as an orthogonal direct sum of indecomposable modules, which are classified in Table $4.1$ in \cite{liebeck-seitz}. Moreover, in Chapter $4$, a formula for the dimension of the centralizer of an element in the corresponding algebraic group is provided, namely, if $x \in \mathfrak{sp}_{2n}$ has Jordan blocks of sizes $t_1 \ge \dots \ge t_r$, then 
$$\dim C_G(x) = \sum_{i=1}^r (it_i - \chi(t_i)),$$
where the function $\chi$ is defined previously in the chapter.

Now decompose $V=U \perp W$, where $W$ is indecomposable and has the smallest Jordan block sizes, so either $W$ is a single Jordan block of size $t_r$ (the $V(t_r)$ in the notation of the book) or $W = W(t_r)$ or $W_{\ell}(t_r)$ and has two Jordan blocks of size $t_r$. We prove the following lemma:

\begin{lemma}\label{centralizer_in_Lie_algebra}
    Let $x \in \mathfrak{sp}_{2n}$ be nilpotent.
    If $W=V(t_r)=J_{t_r}$ is a single Jordan block, then
    $$\dim C_{\mathfrak{sp}_{2n}}(x) = \dim C_{\mathfrak{sp}(U)}(x_U) + \dim C_{\mathfrak{sp}(W)}(x_W) + (r-1)t_r$$ and if $W = W(t_r)$ or $W_{\ell}(t_r)$ (two Jordan blocks of size $t_r$), then 
    $$\dim C_{\mathfrak{sp}_{2n}}(x) = \dim C_{\mathfrak{sp}(U)}(x_U) + \dim C_{\mathfrak{sp}(W)}(x_W) + 2(r-2)t_r.$$
\end{lemma}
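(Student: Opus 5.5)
The plan is to split $\mathfrak{sp}(V)$ along the orthogonal decomposition $V=U\perp W$ and to compute the centralizer of $x$ block by block. Let $B$ be the non-degenerate alternating form. Recall that $\mathfrak{sp}(V)=\{y\in\End(V)\mid B(yv,w)+B(v,yw)=0\}$. Since $\operatorname{char}k=2$, this condition says exactly that $y$ is self-adjoint with respect to $B$. For a linear map $\varphi\colon W\to U$, let $\varphi^*\colon U\to W$ denote its adjoint, defined by $B(\varphi w,u)=B(w,\varphi^* u)$. This is well defined because $B|_U$ and $B|_W$ are non-degenerate.

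\textbf{Step 1: decompose $\mathfrak{sp}(V)$.} Writing endomorphisms of $V$ in $2\times 2$ block form relative to $U\oplus W$, I will check that
$$\mathfrak{sp}(V)=\mathfrak{sp}(U)\oplus\mathfrak{sp}(W)\oplus M,\qquad M=\{\varphi+\varphi^*\mid \varphi\in\Hom(W,U)\},$$
and that $M\cong\Hom(W,U)$ as vector spaces. This holds because the self-adjointness condition forces the $U\to W$ block to be the adjoint of the $W\to U$ block, and forces the diagonal blocks to lie in $\mathfrak{sp}(U)$ and $\mathfrak{sp}(W)$. Since $x=x_U+x_W$ preserves $U$ and $W$, the map $\operatorname{ad}x$ preserves each of the three summands. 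Hence
$$\dim C_{\mathfrak{sp}(V)}(x)=\dim C_{\mathfrak{sp}(U)}(x_U)+\dim C_{\mathfrak{sp}(W)}(x_W)+\dim C_M(x).$$

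\textbf{Step 2: compute $C_M(x)$.} Compute $[x,\varphi+\varphi^*]$ blockwise. Its $W\to U$ component is $x_U\varphi-\varphi x_W$, and its $U\to W$ component is $x_W\varphi^*-\varphi^*x_U$. Because $x_U$ and $x_W$ are self-adjoint, the second component is the adjoint of the first, up to sign, which is irrelevant in characteristic $2$. So the second vanishes if and only if the first does. Therefore
$$C_M(x)\cong\Hom_{k[x]}(W,U)=\{\varphi\in\Hom(W,U)\mid x_U\varphi=\varphi x_W\}.$$
This step is the only place where the form genuinely enters. The point to verify carefully is that the adjoint identity $(x_U\varphi-\varphi x_W)^*=\varphi^*x_U-x_W\varphi^*$ holds. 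It does, since $(x_U\varphi)^*=\varphi^*x_U^*=\varphi^*x_U$ and similarly $(\varphi x_W)^*=x_W\varphi^*$.

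\textbf{Step 3: count.} I will use the standard fact that $\dim\Hom_{k[x]}(J_a,J_b)=\min(a,b)$ for nilpotent Jordan blocks, together with additivity over direct sums of blocks.

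In the first case, $W=J_{t_r}$ and $U$ consists of the $r-1$ blocks $t_1,\dots,t_{r-1}$, each of size at least $t_r$. This gives $\dim C_M(x)=(r-1)t_r$.

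In the second case, $W$ has two blocks of size $t_r$ and $U$ has the $r-2$ blocks $t_1,\dots,t_{r-2}$, each of size at least $t_r$. This gives $\dim C_M(x)=2(r-2)t_r$.

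Substituting into Step 1 yields both formulas. I expect no real obstacle. The only care needed is the block bookkeeping in Step 1 and the adjoint identity in Step 2, both of which are straightforward.
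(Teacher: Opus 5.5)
Your proposal is correct and follows the paper's proof essentially step for step. You use the same block decomposition $\mathfrak{sp}(V)\cong\mathfrak{sp}(U)\oplus\mathfrak{sp}(W)\oplus\Hom(W,U)$, with the $U\to W$ block forced to be the adjoint of the $W\to U$ block; the same reduction of the off-diagonal centralizer to $\Hom_{k[x]}(W,U)$; and the same count via $\dim\Hom_{k[x]}(J_a,J_b)=\min(a,b)$. Your explicit check that $(x_U\varphi-\varphi x_W)^*=\varphi^*x_U-x_W\varphi^*$, which shows the $U\to W$ intertwining condition is redundant, fills in a step the paper only asserts with ``as before, we have $C=B^*$''.
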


\begin{proof}
     The Lie algebra as a module for $x$ decomposes as 
     $$\mathfrak{sp}(V) \cong \mathfrak{sp}(U) \oplus \mathfrak{sp}(W) \oplus (U \otimes W).$$ 
    Indeed, let $\langle \cdot,\cdot \rangle:V \times V \rightarrow k$ be the symplectic form, so we have
    $$\mathfrak{sp}(V) = \{ T \in \operatorname{End}(V) \mid \langle Tv, w \rangle  + \langle v, Tw \rangle  = 0 \ \forall v,w \in V \}$$
    and similarly for $\mathfrak{sp}(U), \mathfrak{sp}(W)$. With respect to $V=U\perp W$, every $T\in\operatorname{End}(V)$ has form
    $$T = \begin{pmatrix} A & B \\ C & D \end{pmatrix}, 
\qquad A:U\to U,\ B:W\to U,\ C:U\to W,\ D:W\to W.$$
    Since the bilinear form is block diagonal $\langle \cdot,\cdot \rangle=\langle \cdot,\cdot \rangle_U \oplus \langle \cdot,\cdot \rangle_W$, the symplectic condition translates into the following: 
For $u,u'\in U$,
  \[
  \langle Au,u'\rangle_U + \langle u,Au' \rangle_U = 0, \text{ meaning } A \in \mathfrak{sp}(U).
  \]
For $w,w'\in W$,
  \[
  \langle Dw,w' \rangle_W + \langle w,Dw' \rangle_W = 0, \text{ meaning }D \in \mathfrak{sp}(W).
  \]
For $u\in U, w\in W$,
  \[
  \langle Bw, u \rangle_U + \langle w, Cu \rangle_W = 0.
  \]
  Defining the adjoint $B^*:U\to W$ by 
  \[
  \langle Bw, u \rangle_U = \langle w, B^*u \rangle_W,
  \]
  this condition yields $C = B^*$. Therefore, $T$ has the form
$$T = \begin{pmatrix} A & B \\ B^* & D \end{pmatrix}, 
\quad A \in \mathfrak{sp}(U),\ D \in \mathfrak{sp}(W),\ B \in \Hom(W,U).$$
Thus, the decomposition (as vector spaces) is
$$\mathfrak{sp}(V) \cong \mathfrak{sp}(U) \oplus \mathfrak{sp}(W) \oplus \Gamma,$$
where $\Gamma \subseteq \Hom(W,U) \oplus \Hom(U^*,W^*)$, so $\Gamma \cong \Hom(W,U)$.
Since the modules are self-dual, we have $U \cong U^*$ and $W \cong W^*$, so we obtain
$$\Hom(W,U) \cong U \otimes W^* \cong U \otimes W.$$

Now write $x=x_U \oplus x_W$ and suppose $y$ centralizes $x$. With respect to the decomposition $V=U \perp W$, any $y \in \mathfrak{sp}(V)$ has form 
$$y=\begin{pmatrix} A & B \\ C & D \end{pmatrix}, 
\qquad A \in \End(U),\ D \in \End(W),\ B \in \Hom(W,U), \ C \in \Hom (U,W).$$
The relation $[y,x]=0$ is thus equivalent to the relations
\[
[A,x_U]=0, \quad [D,x_W]=0, \quad B x_W = x_U B, \quad C x_U = x_W C,
\] so $A \in C_{\End(U)}(x_U)$, $D \in C_{\End(W)}(x_W)$ and $B \in \Hom_x(W,U), C \in \Hom_x(U,W)$.
As before, we have $C=B^*$, so the off-diagonal terms are determined by $B$. Hence,
\[
\dim C_{\mathfrak{sp}(V)}(x)
= \dim C_{\mathfrak{sp}(U)}(x_U)
+ \dim C_{\mathfrak{sp}(W)}(x_W)
+ \dim \mathrm{Hom}_x(W,U).
\]
Recall the standard fact that for two Jordan blocks $J_a,J_b$, we have
\[
\dim \mathrm{Hom}_x(J_a,J_b) = \min(a,b).
\]
If $W = V(t_r) = J_{t_r}$ is a single Jordan block, we have
\[
\dim \mathrm{Hom}_x(W,U)
= \dim \Hom_x(J_{t_r},\bigoplus_{i=1}^{r-1} J_{t_i})
= \sum_{i=1}^{r-1} \dim \Hom_x(J_{t_r},J_{t_i})
= \sum_{i=1}^{r-1} \min(t_r,t_i)
= \sum_{i=1}^{r-1} t_r
= (r-1)t_r.
\]
If $W$ is indecomposable with two blocks of size $t_r$
(either $W(t_r)$ or $W_{\ell}(t_r)$), we have
$$
\dim \mathrm{Hom}_x(W,U)
= \dim \Hom_x(J_{t_r} \oplus J_{t_r},\bigoplus_{i=1}^{r-2} J_{t_i})
= \sum_{i=1}^{r-2} \dim \Hom_x(J_{t_r} \oplus J_{t_r},J_{t_i})
= \sum_{i=1}^{r-2} 2 \dim \Hom_x(J_{t_r},J_{t_i}) $$

$$= 2 \sum_{i=1}^{r-2} \min(t_r,t_i)
= 2 \sum_{i=1}^{r-2} t_r
= 2(r-2)t_r
$$
and the result follows.
\end{proof}

\begin{lemma}\label{centralizer_in_group}
    Let $x \in \mathfrak{sp}_{2n}$ be nilpotent. If $W=V(t_r)=J_{t_r}$ is a single Jordan block, then $$\dim C_{Sp_{2n}}(x) = \dim C_{Sp_{(U)}}(x_U) + \dim C_{Sp_{(W)}}(x_W) + (r-1)t_r$$ and if $W = W(t_r)$ or $W_{\ell}(t_r)$ (two Jordan blocks of size $t_r$), then $$\dim C_{Sp_{2n}}(x) = \dim C_{Sp_{(U)}}(x_U) + \dim C_{Sp_{(W)}}(x_W) + 2(r-2)t_r.$$
    
\end{lemma}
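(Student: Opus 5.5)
The plan is to use the Liebeck--Seitz formula $\dim C_G(x)=\sum_{i=1}^r (it_i-\chi(t_i))$ from \cite{liebeck-seitz}, apply it to $x$, $x_U$ and $x_W$ separately, and compare. The key point is that $\chi(t_i)$ depends only on the indecomposable summand containing the block $t_i$, together with the sizes of the other blocks. Since $W$ has the smallest blocks, I expect passing from $V$ to $U$ and to $W$ not to change the $\chi$-values, so the $\chi$-terms cancel and the centralizer dimensions differ only through the coefficients $i$.

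First case: $W=V(t_r)$ is a single block. The blocks of $U$ are $t_1\ge\dots\ge t_{r-1}$ with the same indices, so
\[
\dim C_{Sp(U)}(x_U)=\sum_{i=1}^{r-1}(it_i-\chi_U(t_i)),
\qquad
\dim C_{Sp(W)}(x_W)=t_r-\chi_W(t_r).
\]
Subtracting these from $\dim C_{Sp_{2n}}(x)$ leaves $rt_r-t_r=(r-1)t_r$, provided that $\chi_V(t_i)=\chi_U(t_i)$ for $i<r$ and $\chi_V(t_r)=\chi_W(t_r)$.

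Second case: $W$ has two blocks of size $t_r$, at positions $r-1$ and $r$. The $W$-contribution in $V$ is $(r-1)t_r+rt_r$, while in $W$ alone it is $t_r+2t_r$. The difference is $(2r-1-3)t_r=2(r-2)t_r$, as required, again provided the $\chi$-values agree.

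The main obstacle is justifying this invariance of $\chi$. In \cite{liebeck-seitz}, for $V(m)$ summands $\chi$ is defined through a maximum over summands $V(m')$ and $W_\ell(m')$ with $m'\ge m$, involving terms like $m'/2$ and $\ell$. Removing the smallest summand deletes no constraint relevant to the larger blocks. Conversely, for $t_r$ the value could depend on the larger summands in $U$, which would break $\chi_V(t_r)=\chi_W(t_r)$. I would first read off the definition carefully. If the dependence is genuine, I would instead argue directly, as in Lemma~\ref{centralizer_in_Lie_algebra}: let $C_G(x)$ act on the decomposition and compare the stabilizer of $U\perp W$, which is $C_{Sp(U)}(x_U)\times C_{Sp(W)}(x_W)$, with the full centralizer. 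The latter is obtained by adjoining the unipotent directions $\begin{pmatrix}1&B\\-B^*&1\end{pmatrix}$ (suitably corrected) for $B\in\Hom_x(W,U)$, which contribute $\dim\Hom_x(W,U)$. That last count is exactly the one computed in the proof of Lemma~\ref{centralizer_in_Lie_algebra}.
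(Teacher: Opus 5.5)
Your main route is the paper's own. Its proof applies $\dim C_G(x)=\sum_i(it_i-\chi(t_i))$ to $V$ and then reads $\sum_{i<r}(it_i-\chi(t_i))$ as $\dim C_{Sp(U)}(x_U)$ and $t_r-\chi(t_r)$ as $\dim C_{Sp(W)}(x_W)$. In other words, it silently assumes exactly the invariance of $\chi$ that you flag. You are right that this is the crux. But the invariance is false in both directions, so the step cannot be filled, and the identity itself fails for such decompositions.

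The index function of $V$ is the maximum, over all summands $M_j$, of the expressions $\max\{0,\min\{m-m_j+l_j,l_j\}\}$ that Lemma~\ref{discrepancy} evaluates for a single summand. In particular, $\chi(m)\ge\chi(m')$ and $m-\chi(m)\ge m'-\chi(m')$ whenever $m\ge m'$.
\begin{itemize}
\item Contrary to your claim, the smallest summand does constrain the larger blocks. For $V=W(5)\perp V(2)$ one gets $\chi_V(5)=1\neq 0=\chi_{W(5)}(5)$. Hence $\dim C_G(x)=4+9+5=18$, while the lemma predicts $15+1+4=20$.
\item Your worry about $t_r$ is realized. For $V=V(4)\perp W(3)$ one has $\chi_V(3)\ge 3-4+2=1$ but $\chi_{W(3)}(3)=0$. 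So $\dim C_G(x)=2+5+8=15$, against the predicted $2+9+6=17$.
\end{itemize}

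Your fallback does not rescue this, and the second example shows why. $\Hom_x(W,U)$ counts only infinitesimal moves of $W$, which is why Lemma~\ref{centralizer_in_Lie_algebra} is fine. In characteristic $2$, however, not every $B$ integrates to an element of $C_G(x)$, and no ``correction'' can fix this.
\begin{itemize}
\item The function $q(v)=\langle v,xv\rangle$ is $C_G(x)$-invariant and vanishes on $W(3)$. On the graph $\{w+Bw\}$ it equals $q(Bw)$.
\item On $V(4)$, with $xe_i=e_{i+1}$ and $\langle e_i,e_j\rangle=\delta_{i+j,5}$, one has $q(\sum c_ie_i)=c_2^2$.
\item Hence, near $W$, the $C_G(x)$-orbit of $W$ consists only of graphs of maps $B$ with image in $\langle e_3,e_4\rangle$. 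These form a $4$-dimensional subspace of the $6$-dimensional $\Hom_x(W,U)$.
\item This gives $\dim C_G(x)\le 2+9+4=15<17$, without any appeal to Liebeck--Seitz.
\end{itemize}
For generic $B$ the graph is nondegenerate with $q\neq 0$, so it is of type $W_1(3)$. Thus the same $x$ is also $V(4)\perp W_1(3)$, for which the right-hand side is $2+7+6=15$. So the right-hand side is not even an invariant of $x$.

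What is true in general is only the inequality $\dim C_{Sp_{2n}}(x)\le \dim C_{Sp(U)}(x_U)+\dim C_{Sp(W)}(x_W)+\dim\Hom_x(W,U)$. Equality holds exactly when $\chi_V$ agrees with $\chi_U$ and $\chi_W$ on their blocks, for example when every summand's own index already equals the global $\chi$ at its block size. The lemma needs such a hypothesis. Note also that this inequality makes the discrepancy superadditive, $\Delta(x)\ge\Delta(x_U)+\Delta(x_W)$. That is the wrong direction for the additivity used in Lemma~\ref{discrepancy}.
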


\begin{proof}
    If $W=V(t_r)$ is a single Jordan block of size $t_r$ we have
    \begin{align*}
    \dim C_G(x) &= \sum_{i=1}^{r} (it_i - \chi(t_i)) \\
    &= \sum_{i=1}^{r-1} it_i - \sum_{i=1}^{r} \chi(t_i) + rt_r \\
    &=\sum_{i=1}^{r-1} it_i - \sum_{i=1}^{r} \chi(t_i) + (r-1)t_r + t_r \\
    &= \dim C_{Sp(U)}(x_U) + \dim C_{Sp(W)}(x_W) + (r-1)t_r.
    \end{align*}

    If $W = W(t_r)$ or $W_{\ell}(t_r)$ (two Jordan blocks of size $t_r$) we have
    \begin{align*}
    \dim C_G(x) &= \sum_{i=1}^{r} (it_i - \chi(t_i)) \\
    &= \sum_{i=1}^{r-2} it_i - \sum_{i=1}^{r} \chi(t_i) + (r-1)t_{r-1}+rt_t \\
    &=\sum_{i=1}^{r-2} it_i - \sum_{i=1}^{r} \chi(t_i) + (2r-1)t_r \\
    &=\sum_{i=1}^{r-2} it_i - \sum_{i=1}^{r} \chi(t_i) + 3t_r+ (2r-4)t_r  \\
    &= \dim C_{Sp(U)}(x_U) + \dim C_{Sp(W)}(x_W) + 2(r-2)t_r.
    \end{align*}
In the above computations we have used the dimensions of the centralizers in $G$ of the indecomposables (proved in \cite{liebeck-seitz}). These are also shown later in the proof of Lemma \ref{discrepancy}.
\end{proof}
    
The next lemma is essential in proving the irreducibility of the varieties.
\begin{lemma}\label{discrepancy}
   Let $x \in \mathfrak{sp}_{2n}$ be a nilpotent element. Define the \textbf{discrepancy} of $x$ as
   $$\Delta(x) =\dim C_{\mathfrak{sp}_{2n}}(x) - \dim C_{Sp_{2n}}(x).$$
   Then we have $\Delta(x) \leq n$, with equality precisely when all the indecomposables are single Jordan blocks (the $V(t_i)$).
\end{lemma}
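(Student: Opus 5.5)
Subtracting Lemma \ref{centralizer_in_group} from Lemma \ref{centralizer_in_Lie_algebra} shows that the cross terms $(r-1)t_r$ and $2(r-2)t_r$ cancel. This gives the additivity
$$\Delta(x) = \Delta(x_U) + \Delta(x_W),$$
where $\Delta(x_U)$ and $\Delta(x_W)$ are the discrepancies computed in $\mathfrak{sp}(U)$ and $\mathfrak{sp}(W)$. Since $V=U\perp W$ with $W$ the indecomposable carrying the smallest block, induction on the number of indecomposable summands reduces $\Delta(x)$ to the sum of the discrepancies of the indecomposable summands. Let $V$ have dimension $2n$. The single Jordan block $V(t)$ only occurs with $t$ even, since it must carry a nondegenerate alternating form, so $\dim V(t)=t=2m$. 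The summands $W(t)$ and $W_\ell(t)$ have dimension $2t$. It therefore suffices to show two things: $\Delta(V(2m)) = m$, i.e.\ exactly half the dimension of the summand, and $\Delta(W) < t$, i.e.\ strictly less than half the dimension, for $W=W(t)$ or $W_\ell(t)$. Summing over summands then gives $\Delta(x)\le n$, with equality iff no $W$-type summand occurs.

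For the single block $V(2m)$, the group side is known from Liebeck--Seitz: $\dim C_{Sp}(x) = 2m - \chi(2m)$, and here $\chi(2m)=m$, so the group centralizer has dimension $m$. For the Lie algebra side, I would compute directly. The centralizer of $J_{2m}$ in $\End(V)$ is $k[x]$, of dimension $2m$. The condition to be in $\mathfrak{sp}$ is $y^* = y$ (in characteristic $2$, $-y^*=y^*$), where $*$ is the adjoint. Since $x^*=-x=x$, we get $(x^i)^*=x^i$, so all of $k[x]$ is self-adjoint. Hence $\dim C_{\mathfrak{sp}}(x)=2m$ and $\Delta=m$. I would double-check the value of $\chi(2m)$ against the Liebeck--Seitz normalization, since this is exactly where the sharp constant comes from.

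For $W(t)$ and $W_\ell(t)$, I would again take the group centralizer dimensions from Liebeck--Seitz, namely $3t-\chi$-terms as in Lemma \ref{centralizer_in_group}. I would then bound the Lie algebra centralizer from above. One route: $C_{\End(W)}(x)$ has dimension $4t$ (two equal blocks), and the $\mathfrak{sp}$-condition $y^*=y$ is the fixed space of an involution $y\mapsto y^*$ on this $4t$-dimensional space. Writing $y$ as a $2\times2$ matrix of polynomials in the block shift and computing the adjoint explicitly in the Liebeck--Seitz basis for $W(t)$ (where the two blocks are paired as dual) and for $W_\ell(t)$, the fixed space should have dimension at most about $2t$ plus a small correction. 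That suffices to give $\Delta< t$, since the group centralizer contributes roughly $2t$ minus the $\chi$ values. The main obstacle is precisely this step: getting the explicit form on $W_\ell(t)$ (which depends on $\ell$ and is not simply a hyperbolic pairing) right, and proving a sharp enough upper bound on the fixed space of the adjoint involution. If the direct computation becomes messy, my fallback is to realize the centralizer as $\{y\in C_{\End}(x): \langle y v, v\rangle$ alternating-compatible$\}$. I would then bound it by the space of $x$-invariant symmetric forms $\Hom_x(W,W^*)^{\mathrm{sym}}$, whose dimension can be counted in terms of $t$ and $\ell$.
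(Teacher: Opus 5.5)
Your reduction (additivity of $\Delta$ over orthogonal summands via Lemmas \ref{centralizer_in_Lie_algebra} and \ref{centralizer_in_group}, then comparing $\Delta$ with half the dimension of each summand) is the same as in the paper, and so is your computation $\Delta(V(2m))=m$. The gap is the step you flag yourself: the inequality $\Delta<t$ for $W(t)$ and $W_\ell(t)$ is never proved, and the estimates you sketch for it are wrong.

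\begin{itemize}
\item \textbf{Group side.} This is $\dim C_{Sp(W)}(x)=3t-2\chi(t)$, i.e.\ $3t$ for $W(t)$ and $3t-2\ell$ for $W_\ell(t)$, not ``roughly $2t$''.
\item \textbf{Lie algebra side for $W(t)$.} This is exactly $3t$: $y_{11}\in k[J_t]$ with $y_{22}=y_{11}^{T}$, plus two $t$-dimensional off-diagonal blocks that are automatically symmetric. So the fixed space of $y\mapsto y^*$ on the $4t$-dimensional $C_{\End(W)}(x)$ is three quarters of it, and a bound of the form ``about $2t$ plus a small correction'' cannot be proved.
\end{itemize}

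What you actually need is $\dim C_{\mathfrak{sp}(W)}(x)<4t-2\ell$. The trivial bound $4t$ fails for $W_\ell(t)$, while $3t$ suffices because $2\ell<t$.

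The paper gets $\le 3t$ as follows. It chooses an $x$-stable Lagrangian $L$ with $x|_L\cong J_t$, so that $x=\begin{pmatrix}A&B\\0&A^T\end{pmatrix}$ lies in $\mathfrak p=\operatorname{Stab}(L)$. It then bounds the kernel of $\operatorname{ad}x$ on each of $\mathfrak u$, $\mathfrak m$, $\mathfrak u^-$ by $t$, using the equations $AS+SA^T=0$, $[A,Y]=0$ and $A^TT+TA=0$ with $A=J_t$. This yields an upper bound because $\operatorname{ad}x$ preserves the filtration $\mathfrak u\subset\mathfrak p\subset\mathfrak{sp}(W)$.

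Your fallback, carried out, would have closed the gap, and more sharply than you expected. Since $x^*=x$ in characteristic $2$, the map $y\mapsto\langle y\,\cdot,\cdot\rangle$ sends $C_{\mathfrak{sp}(V)}(x)$ isomorphically onto the space of symmetric bilinear forms $\beta$ with $\beta(xv,w)=\beta(v,xw)$. That space depends only on the Jordan type of $x$, not on $\ell$. A block count gives exactly $3t$ for type $(t,t)$: a $t$-dimensional space of Hankel forms on each $J_t$, plus $t$ for the cross pairing. Hence $\Delta(W(t))=0$ and $\Delta(W_\ell(t))=2\ell<t$. As submitted, however, this decisive inequality is only conjectured, so the proposal does not yet prove the lemma.
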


\begin{proof}
    According to \cite{liebeck-seitz}, $V$ decomposes as a direct sum $V\cong \bigoplus_j M_j$, where each $M_j$ is one of the three possible indecomposables, namely $V(t_j),W(t_j)$ or $W_{\ell}(t_j)$. In light of this and Lemmas \ref{centralizer_in_Lie_algebra} and \ref{centralizer_in_group}, it follows that the discrepancy is equal to the sum of the discrepancies on each indecomposable, that is
    $$\Delta(x)=\sum_j \bigg( \dim C_{\mathfrak{sp}(M_j)}(x_{M_j}) - \dim C_{Sp(M_j)}(x_{M_j}) \bigg),$$
    where $x=\bigoplus_j x_{M_j}$. Hence, the problem reduces to computing the discrepancy for each one of the three indecomposables.

\subsection*{First computation: Centralizers in the group}
    
    We use the formulas in \cite{liebeck-seitz} to compute the dimension of the centralizer in the group first.
    
    Case $1$: $V\downarrow x=V(2n)$ and $x=J_{2n}$, a single Jordan block of dimension $2n$ and $l=2n/2=n$. We have
    \begin{align*}
    \dim C_G(x) &= 2n-\chi(2n) \\
    &= 2n-max\{0,min\{ 2n-2n+l,l \}\} \\
    &=2n-max\{ 0,n \} \\
    &= n.
\end{align*}
Note that this is to be expected since a single Jordan block is regular nilpotent, so its centralizer has dimension equal to the rank of the Lie algebra.

    Case $2$: $V\downarrow x=W(n)$ and $x=J_{n}\oplus J_n$, two Jordan blocks of dimension $n$ and $l=0$. Here $t_1=t_2=n$. We have
    \begin{align*}
    \dim C_G(x) &= \sum_{i=1}^2(it_i-\chi(t_i)) \\
    &= n-\chi(n)+2n-\chi(n) \\
    &= 3n-2 \chi(n) \\
    &= 3n-max\{0,min\{ n-n+0,0 \}\} \\
    &= 3n.
\end{align*}

    Case $3$: $V\downarrow x=W_{\ell}(n)$ and $x$ is of the form $\begin{pmatrix}
A&B\\ 0&A^T
\end{pmatrix}$ with $B=B^T$ and $A=J_n$. Here $0 < l < n/2$ and $t_1=t_2=n$. We have
    \begin{align*}
    \dim C_G(x) &= \sum_{i=1}^2(it_i-\chi(t_i)) \\
    &= n-\chi(n)+2n-\chi(n) \\
    &= 3n-2 \chi(n) \\
    &= 3n-2 max\{0,min\{ n-n+l,l \}\} \\
    &=3n-2max\{ 0,l \} \\
    &= 3n-2l.
\end{align*}

\subsection*{Second computation: Centralizers in the Lie algebra}

Here we compute the dimension of the centralizer in the Lie algebra for  two of the indecomposables and obtain an upper bound for the third one.

Case $1$: $V\downarrow x=V(2n)$ and $x=J_{2n}$, a single Jordan block of dimension $2n$. We may choose $\{ e_1,\dots,e_{2n} \}$ a cyclic basis for $V$, so that $xe_i=e_{i+1}$ for $i < 2n$ and $xe_{2n}=0$. Moreover, we can arrange the symplectic form in antidiagonal form, so we have $\langle e_i,e_j\rangle = \delta_{i+j,2n+1}$. On one hand, since $x$ is regular nilpotent, its full centralizer consists of polynomials in $x$, so
$C_{\End(V)}(x)=\{ p(x) \mid p \in k[X],\deg p \leq 2n-1\}$
and $\dim C_{\End(V)}(x)=2n$. On the other hand, we claim that any polynomial in $x$ lies in $\mathfrak{sp}(V)$. Extending afterwards by linearity, it suffices to check the symplectic condition for monomials $x^k$. We have
$$\langle x^k e_i,e_j \rangle + \langle e_i,x^k e_j \rangle
= \langle e_{i+k},e_j \rangle + \langle e_i, e_{j+k} \rangle
= \delta_{i+k+j,2n+1} + \delta_{i+j+k,2n+1}=2 \delta_{i+k+j,2n+1} =0.$$
Thus, $\langle x^k v,w \rangle + \langle v,x^k w \rangle =0$ for all $v,w\in V$. Hence, $C_{\End(V)}(x) \subseteq \mathfrak{sp}(V)$ and since $C_{\mathfrak{sp}(V)}(x) = C_{\End(V)}(x) \cap \mathfrak{sp}(V)$, we conclude that $\dim C_{\mathfrak{sp}_{2n}}(x)=\dim C_{\End(V)}(x) = 2n$.

Case $2$: $V\downarrow x=W(n)$ and $x=J_{n}\oplus J_n$ two Jordan blocks of dimension $n$. Write $V=V_1 \oplus V_2$ with $\dim(V_1)=\dim(V_2)=n$ and let $x=x_1\oplus x_2$ where each $x_i$ is a single Jordan block of size $n$ acting on $V_i$. Suppose $y \in \End(V)$ commutes with $x$ and write 
$$y = \begin{pmatrix} y_{11} & y_{12} \\ y_{21} & y_{22} \end{pmatrix}, 
\qquad y_{ij}:V_j\to V_i.$$
The condition $[y,x]=0$ yields $y_{ij}x_j=x_iy_{ij}$, $i=1,2$. Since each $x_i$ is a single Jordan blocks, its commutator consists of polynomials in it, so $y_{11} \in k[x_1]$, $y_{22} \in k[x_2]$, $y_{12} \in \Hom_{k[x]}(V_2,V_1)$, $y_{21} \in \Hom_{k[x]}(V_1,V_2)$. Now,
$\dim \Hom_{k[x]}(V_2,V_1) = \dim \Hom_{k[x]}(J_n,J_n)=n=\dim \Hom_{k[x]}(V_1,V_2)$. Therefore, as a vector space, we have $\dim \{ y \in \End(V) \mid [y,x]=0\} = n+n+n+n=4n$. Now impose the symplectic condition $\langle yv,w \rangle +\langle v,yw \rangle=0$ for all $v,w \in V$. Let us analyse this blockwise. Denote the form by 
$$\Omega = \begin{pmatrix} 0 & I_n \\ I_n & 0 \end{pmatrix}.$$
Then $y \in \mathfrak{sp}(V)$ if and only if $y^t \Omega + \Omega Y=0$. This yields $y_{21}^t=y_{21}$, $y_{11}^t=y_{22}$, $y_{22}^t=y_{11}$, $y_{12}^t=y_{12}$. Therefore, $y_{22}$ is determined by $y_{11}$ and both $y_{12}$ and $y_{21}$ are (skew-)symmetric $n \times n$ matrices. Counting dimensions, we obtain $\dim C_{\mathfrak{sp}_{2n}}(x)=3n$ (one $n$ comes from the $y_{11}$ part (and the $y_{22}$ part is determined by $y_{11}$, so no extra dimension is gained) and one factor of $n$ comes from each of $y_{12}$ and $y_{21}$ (these commute with the corresponding Jordan blocks, so they are polynomials in the blocks and moreover, they are symmetric, but note that this condition does not reduce the dimension of the space, so we obtain $n$)).

Case $3$: $V\downarrow x=W_{\ell}(n)$ and $x$ is of the form $\begin{pmatrix}
A&B\\ 0&A^T
\end{pmatrix}$ with $B=B^T$ and $A=J_n$. Let $V=L\oplus L^{*}$ with $\dim L=n$ and use the symplectic form with Gram matrix as in the previous case.
Then
\[
\mathfrak{sp}(V)=\left\{
X=\begin{pmatrix}X_{11}&X_{12}\\ X_{21}&X_{22}\end{pmatrix} \mid\;
X_{12},X_{21} \text{ symmetric},\; X_{22}=X_{11}^{T}
\right\}.
\]
Relative to the parabolic $P=\operatorname{Stab}(L)$ we have the decomposition (as vector spaces)
\[
\mathfrak{sp}(V)=
\underbrace{\left\{\begin{pmatrix}0&0\\ T&0\end{pmatrix}\right\}}_{\mathfrak u^{-}\cong \operatorname{Sym}^2(L)}
\oplus
\underbrace{\left\{\begin{pmatrix}Y&0\\ 0&Y^{T}\end{pmatrix}\right\}}_{\mathfrak m\cong \mathfrak{gl}(L)}
\oplus
\underbrace{\left\{\begin{pmatrix}0&S\\ 0&0\end{pmatrix}\right\}}_{\mathfrak u\cong \operatorname{Sym}^2(L^{*})},
\]
with $S,T$ symmetric and $Y$ arbitrary. For an indecomposable of type $W_\ell(n)$ (call it $x$) one can choose $L$ so that $L$ is $x$-stable and
\[
x=\begin{pmatrix} A & B\\ 0 & A^{T} \end{pmatrix},\qquad
A=J_n,\quad B=B^{T}.
\]
Here $A$ is a single Jordan block on $L$.

We bound $\dim \mathfrak g^{x}=\dim \ker(\operatorname{ad} x)$ by analyzing each block.

\subsubsection*{Levi part $\mathfrak m$}
Let $X\in\mathfrak m$, $X=\begin{pmatrix}Y&0\\0&Y^T\end{pmatrix}$. Then
\[
[x,X]
=\begin{pmatrix}
[A,Y] & BY^{T}-YB\\
0 & [A^T,Y^T]
\end{pmatrix}.
\]
So $X\in \mathfrak g^{x}$ if and only if
\[
[A,Y]=0,\qquad BY^{T}=YB.
\]
The centralizer of $A=J_n$ in $\mathfrak{gl}_n$ is $\{f(A): f\in k[t], \deg f<n\}$, hence it has dimension $n$.  
The second condition only reduces dimension. Thus
\[
\dim\ker(\operatorname{ad}x|_{\mathfrak m}) \le n.
\]

\subsubsection*{Upper part $\mathfrak u$}
Take $X=\begin{pmatrix}0&S\\0&0\end{pmatrix}$ with $S=S^T$. Then
\[
[x,X]=\begin{pmatrix}0&AS+SA^T\\0&0\end{pmatrix}.
\]
So the condition is
\[
AS+SA^T=0.
\]
Writing $S=(s_{ij})$ with $s_{ij}=s_{ji}$, for $A=J_n$ this gives
\[
s_{i+1,j}=s_{i,j+1}\quad (1\le i,j\le n-1).
\]
This recursion shows all entries are determined by the first row $(s_{11},\dots,s_{1n})$, hence
\[
\dim\ker(\operatorname{ad}x|_{\mathfrak u})\le n.
\]

\subsubsection*{Lower part $\mathfrak u^{-}$}
Take $X=\begin{pmatrix}0&0\\T&0\end{pmatrix}$ with $T=T^T$. Then
\[
[x,X]=\begin{pmatrix}BT&0\\ A^T T+TA& TB\end{pmatrix}.
\]
The conditions are
\[
A^T T+TA=0,\qquad BT=0,\qquad TB=0.
\]
The equation $A^TT+TA=0$ is the same recursion as the one for $S$, hence the centralizer is at most $n$-dimensional. The conditions $BT=TB=0$ are further linear restrictions, so
\[
\dim\ker(\operatorname{ad}x|_{\mathfrak u^{-}})\le n.
\]
Summing up, we have
\[
\dim C_{\mathfrak{sp}_{2n}}(x) = \dim \mathfrak g^{x} \;=\;
\dim\ker(\operatorname{ad}x|_{\mathfrak u^{-}})
+\dim\ker(\operatorname{ad}x|_{\mathfrak m})
+\dim\ker(\operatorname{ad}x|_{\mathfrak u})
\;\le\; n+n+n=3n.
\]

\subsection*{Conclusion}

\begin{center}
\begin{tabular}{|c|c|c|c|}
\hline
$x$           & $\dim C_{\mathfrak{sp}_{2n}}(x)$ & $\dim C_{Sp_{2n}}(x)$ & $\Delta(x)$ \\ \hline
$V(2n)$       & $2n$            & $n$              &    $n$      \\ \hline
$W(n)$        &   $3n$           &          $3n$          &      $0$     \\ \hline
$W_{\ell}(n)$ &   $\le 3n$               &       $3n-2l$        &      $\le 2l<n$      \\ \hline
\end{tabular}
\end{center}
To finish the proof, let us write $$V=\bigoplus_{i} V(2a_i) \oplus \bigoplus_{j}W(m_j) \oplus \bigoplus_{k} W_{\ell_k}(n_k),$$
where $a_i \geq 1,m_j \geq 1,1 \le {\ell_k} < \frac{n_k}{2}$. Then according to the above table, the discrepancy of $x$ is given by
$$\Delta(x) = \sum_i a_i + \sum_j0 + \sum_{k} \Delta(W_{\ell_k}(n_k)) \le \sum_i a_i + \sum_{k} 2\ell_k.$$
On the other hand, the total dimension of the space $V$ is given by
$$\sum_i 2a_i + \sum_j 2m_j + \sum_{k} 2n_{k} = 2n$$
with each $0<\ell_k<\frac{n_k}{2}$, so $2\ell_k<n_k$. Hence,
$$\Delta(x)=\sum_i a_i + \sum_{k} n_k = \frac{1}{2}\bigg( \sum_{i} 2a_i + \sum_{k} 2n_k \bigg) \le \frac{1}{2}\bigg( \sum_{i} 2a_i + \sum_{j} 2m_i + \sum_{k} 2n_k \bigg) = \frac{1}{2}2n=n.$$
Moreover, equality $\Delta(x)=n$ forces equality at every step above. Since each strict inequality $2\ell_k<n_k$ would make the sum strictly smaller, we must have no $W_{\ell_k}(n_k)$ summands. Since dropping the non-negative $\sum_j 2m_j$ also weakens the bound, equality requires no $W(m_j)$ summands either. Hence $V$ is a direct sum of  single Jordan blocks $V(2a_i)$. This completes the proof.
\end{proof}

\section{Commuting variety}
In this section we consider the commuting variety  of $\mathfrak{g}=\mathfrak{sp}_{2n}$, that is the set $$\mathcal{C}_2(\mathfrak{sp}_{2n})=\{ (x,y) \in \mathfrak{sp}_{2n} \times \mathfrak{sp}_{2n} \mid [x,y]=0\}.$$
We immediately obtain a lower bound for the commuting variety:

\begin{lemma}\label{lowerbound}
    Every component of $\mathcal{C}_2(\mathfrak{sp}_{2n})$ has dimension at least $\dim(\mathfrak{sp}_{2n}) + 2n$.
\end{lemma}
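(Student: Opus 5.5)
The lower bound will come from realising $\mathcal{C}_2(\mathfrak{sp}_{2n})$ as the zero locus of few equations in an affine space, and then applying Krull's principal ideal theorem (the affine dimension theorem). The variety sits inside $\mathfrak{sp}_{2n}\times\mathfrak{sp}_{2n}$, an affine space of dimension $2\dim(\mathfrak{sp}_{2n})$. It is cut out by the polynomial equations expressing $[x,y]=0$. The map
$$\mu:\mathfrak{sp}_{2n}\times\mathfrak{sp}_{2n}\to \mathfrak{sp}_{2n},\qquad (x,y)\mapsto [x,y]$$
is a morphism of affine spaces, and $\mathcal{C}_2(\mathfrak{sp}_{2n})=\mu^{-1}(0)$. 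Every irreducible component of the fibre of a morphism between affine spaces has dimension at least $\dim(\text{source})-\dim(\text{target})$. So the naive bound is
$$\dim(\mathfrak{sp}_{2n})=2\dim(\mathfrak{sp}_{2n})-\dim(\mathfrak{sp}_{2n}),$$
which is too weak by $2n$. The plan is therefore to show that the image of $\mu$ lies in a closed subspace of codimension at least $2n$ in $\mathfrak{sp}_{2n}$. Once that is done, $\mu$ can be viewed as a map into an affine space of dimension $\dim(\mathfrak{sp}_{2n})-2n$, and the fibre bound gives exactly $\dim(\mathfrak{sp}_{2n})+2n$.

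The key step is identifying the constraints on commutators. I would use the trace forms $\operatorname{tr}(z\,x^{j})$. For $x,y\in\mathfrak{sp}_{2n}$ one has $\operatorname{tr}([x,y]\,p(x))=0$ for every polynomial $p$, since $\operatorname{tr}(xyp(x)-yxp(x))=\operatorname{tr}(y\,p(x)x-y\,x p(x))=0$. These identities depend on $x$, though, so they do not define a fixed linear subspace. The cleaner route is to observe that $\mathfrak{sp}_{2n}$ in characteristic $2$ is not perfect. With respect to the standard Gram matrix, $\mathfrak{sp}_{2n}=\{T:\Omega T \text{ symmetric}\}$, and symmetric matrices in characteristic $2$ contain the alternating ones as a subspace of codimension $2n$ (the diagonal entries). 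I expect that for $x,y\in\mathfrak{sp}_{2n}$ the matrix $\Omega[x,y]$ is alternating, i.e. has zero diagonal. Writing $\Omega x=X$ and $\Omega y=Y$ with $X,Y$ symmetric, we get $\Omega[x,y]=X\Omega^{-1}Y+Y\Omega^{-1}X$. Setting $M=X\Omega^{-1}Y$, this equals $M+M^{T}$, because $\Omega^{-1}$ is symmetric. In characteristic $2$, $M+M^{T}$ has zero diagonal. Hence $\mu$ lands in the subspace $\{T:\Omega T \text{ alternating}\}$, whose dimension is $\binom{2n}{2}=\dim(\mathfrak{sp}_{2n})-2n$.

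With this in hand the conclusion follows immediately. $\mathcal{C}_2(\mathfrak{sp}_{2n})$ is the fibre over $0$ of a morphism $\mathbb{A}^{2\dim\mathfrak{sp}_{2n}}\to\mathbb{A}^{\dim\mathfrak{sp}_{2n}-2n}$. Equivalently, it is cut out by $\dim(\mathfrak{sp}_{2n})-2n$ polynomial equations. By Krull's theorem every component has dimension at least $2\dim(\mathfrak{sp}_{2n})-(\dim(\mathfrak{sp}_{2n})-2n)=\dim(\mathfrak{sp}_{2n})+2n$.

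The main obstacle is verifying the alternating-diagonal claim and correctly identifying the target subspace, including its dimension count $2n(2n+1)/2-2n$. If the direct computation misbehaves, my fallback is to present $\mathfrak{sp}_{2n}$ in the form $\{T:\Omega T\in\Sym^2\}$ and redo the identity $\Omega[x,y]=M+M^T$ carefully. I would also sanity-check the result against $n=1$: there $\mathfrak{sp}_2$ is $3$-dimensional and its derived algebra is $1$-dimensional, matching $\binom{2}{2}=1$.
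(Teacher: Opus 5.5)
Your proposal is correct and follows essentially the paper's argument. The paper shows $\langle [x,y]v,v\rangle=0$ for all $v$, which is exactly your statement that $\Omega[x,y]$ has zero diagonal. So the commutator map lands in a subspace of dimension $n(2n-1)=\dim(\mathfrak{sp}_{2n})-2n$ (the paper calls it $\mathfrak{so}_{2n}$), and the fibre-dimension bound over $0$ gives the result. Your matrix computation $\Omega[x,y]=M+M^{T}$ and your use of Krull's theorem (which avoids needing the map to be dominant) are only cosmetic differences.
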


\begin{proof}
    We view $\mathfrak{sp}_{2n}$ as the set of self-adjoint operators with respect to the alternating form defining the corresponding algebraic group $Sp_{2n}$. We note that the derived Lie algebra of $\mathfrak{sp}_{2n}$ is contained in $\mathfrak{so}_{2n}$ in characteristic $2$. Indeed, letting $A,B \in \mathfrak{sp}_{2n}$, they are self-adjoint, so $\langle Av,w \rangle = \langle v,Aw \rangle$ for all $v,w \in V$ and the same for $B$. Then 
$$\langle [A,B]v,v \rangle=\langle ABv,v \rangle + \langle BAv,v \rangle =\langle Bv,Av \rangle +\langle Av,Bv \rangle = 2\langle Av,Bv \rangle=0$$
since $\langle \cdot,\cdot \rangle$ is alternating and in characteristic $2$ we have $\langle v,w \rangle= \langle w,v \rangle$. So any commutator is skew-adjoint and this shows that $[\mathfrak{sp}_{2n},\mathfrak{sp}_{2n}] \subseteq \mathfrak{so}_{2n}$. Therefore, considering the morphism $\phi: \mathfrak{sp}_{2n} \times \mathfrak{sp}_{2n} \rightarrow \mathfrak{so}_{2n}$, $(x,y) \mapsto [x,y]$, by the fibre dimension theorem every irreducible component of every fibre of $\phi$ (in particular $\phi^{-1}(0)=\mathcal{C}_2(\mathfrak{sp}_{2n})$) has dimension at least
$$\dim(\mathfrak{sp}_{2n} \times \mathfrak{sp}_{2n})- \dim(\mathfrak{so}_{2n}) = 2(n(2n+1))-n(2n-1)=(2n^2+n)+2n=\dim(\mathfrak{sp}_{2n})+2n.$$
\end{proof}

For the next step, inside $\mathfrak{sp}_2$, pick a $2$-dimensional abelian subalgebra $\mathfrak{a}_0$. Embed $n$ copies of $\mathfrak{a}_0$ to obtain $\mathfrak{a} = \mathfrak{a}_0^{\oplus n} \subseteq \mathfrak{sp}_{2n}$, so we have $\dim \mathfrak{a} = 2n$. Let $G=Sp_{2n}$ and consider $G \cdot (\mathfrak{a} \times \mathfrak{a})$, the set of all $G$-conjugates of pairs from $\mathfrak{a} \times \mathfrak{a}$.

\begin{theorem}
    $\mathcal{C}_2(\mathfrak{sp}_{2n}) = \overline{G \cdot (\mathfrak{a} \times \mathfrak{a})}$ (the closure of $G \cdot (\mathfrak{a} \times \mathfrak{a})$).
\end{theorem}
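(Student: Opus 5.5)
The plan is to prove the two inclusions separately. The inclusion $\overline{G\cdot(\mathfrak a\times\mathfrak a)}\subseteq \mathcal C_2(\mathfrak{sp}_{2n})$ is immediate: $\mathfrak a$ is abelian and $\mathcal C_2$ is closed and $G$-stable. For the reverse inclusion, I first need to understand $\mathfrak a$.

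\textbf{The structure of $\mathfrak a$.} In characteristic $2$ we have $\mathfrak{sp}_2=\mathfrak{sl}_2\ni I$, so I take $\mathfrak a_0=\langle I,e\rangle$ with $e$ a nonzero nilpotent. A generic element of $\mathfrak a$ is then $x=\bigoplus_i(a_iI+b_ie)$ with distinct $a_i$ and all $b_i\neq 0$.

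Since elements of $\mathfrak{sp}(V)$ are self-adjoint in characteristic $2$, the generalized eigenspaces $V_\lambda$ of any $x\in\mathfrak{sp}(V)$ are mutually orthogonal. Hence
$$C_{\mathfrak{sp}(V)}(x)=\bigoplus_\lambda C_{\mathfrak{sp}(V_\lambda)}(x_{V_\lambda}).$$
For the generic $x$ above, the blocks are $\mathfrak{sp}_2$ and $x_{V_\lambda}=a_iI+b_ie$, whose centralizer is $\langle I,e\rangle$, so $C_{\mathfrak{sp}_{2n}}(x)=\mathfrak a$. A parameter count on $G\times\mathfrak a$ shows $G\cdot\mathfrak a$ is dense in $\mathfrak{sp}_{2n}$: for generic $x\in\mathfrak a$, the $\mathfrak a$-elements conjugate to $x$ form an $n$-dimensional family, and $\dim C_G(x)=n$. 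Consequently $\overline{G\cdot(\mathfrak a\times\mathfrak a)}$ is irreducible and contains every pair $(x,y)$ with $x$ in the dense set of conjugates of generic elements of $\mathfrak a$ and $y\in C(x)$. Its dimension is $\dim\mathfrak{sp}_{2n}+2n$.

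\textbf{Induction on $n$ via the eigenspace decomposition.} The case $n=1$ is a direct check in $\mathfrak{sl}_2$. Let $(x,y)\in\mathcal C_2$ and suppose $x$ (or, after swapping, $y$) has at least two distinct eigenvalues. Then $V=\bigoplus_\lambda V_\lambda$ is an orthogonal decomposition preserved by $y$, so
$$(x,y)\in\prod_\lambda \mathcal C_2(\mathfrak{sp}(V_\lambda)).$$
By induction each factor is the closure of the $Sp(V_\lambda)$-saturation of its own $\mathfrak a\times\mathfrak a$. The product of the smaller $\mathfrak a$'s is conjugate to $\mathfrak a$, so $(x,y)\in\overline{G\cdot(\mathfrak a\times\mathfrak a)}$.

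\textbf{The remaining bad locus.} What is left is the set $B$ of pairs where both $x=\lambda I+x_n$ and $y=\mu I+y_n$ have a single eigenvalue, with $x_n$ nilpotent. Fibring $B$ over nilpotent orbits $\mathcal O$ of $x_n$, and using $\dim\mathcal O=\dim G-\dim C_G(x_n)$ together with $y\in C(x)$, gives
$$\dim B\le 2+\max_{x_n}\big(\dim\mathcal O+\dim C_{\mathfrak{sp}_{2n}}(x_n)\big)=2+\dim\mathfrak{sp}_{2n}+\max\Delta(x_n)\le \dim\mathfrak{sp}_{2n}+n+2.$$
The last step is Lemma \ref{discrepancy}. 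For $n\ge 3$ this is strictly less than $\dim\mathfrak{sp}_{2n}+2n$. By Lemma \ref{lowerbound}, $B$ then cannot contain an irreducible component of $\mathcal C_2$. Since $\mathcal C_2=\overline{G\cdot(\mathfrak a\times\mathfrak a)}\cup B$, every component lies in the irreducible set $\overline{G\cdot(\mathfrak a\times\mathfrak a)}$, which gives equality.

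\textbf{Main obstacle: $n=2$.} For $n=2$ the crude bound gives only $\dim\mathfrak{sp}_4+4$, which does not suffice. There I would refine the estimate by using that $y_n$ must itself be nilpotent inside $C(x_n)$. This should lose at least one dimension: the identity lies in $C(x_n)$ but is not nilpotent, so the nilpotent part of $C(x_n)$ is a proper subvariety. If that fails, I would treat the finitely many nilpotent orbits of $\mathfrak{sp}_4$ by hand, showing that each pair in $B$ deforms, through commuting pairs, to one where $x$ has two eigenvalues. A natural deformation is $(x,y)\mapsto(x+tz,y)$ with $z$ a semisimple element of $C(x)\cap C(y)$, for instance a scalar on an $x$-stable orthogonal summand.
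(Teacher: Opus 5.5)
Your proposal is correct and follows essentially the same route as the paper. The shared ingredients are induction through the orthogonal generalized-eigenspace decomposition whenever one of $x,y$ has two eigenvalues, and a bound via Lemma \ref{discrepancy} on the scalar-plus-nilpotent locus, played against Lemma \ref{lowerbound}. Phrasing this as a bound on $B$, so that $B$ contains no component, and dropping the paper's Case~2, is in fact the cleaner way to run the argument. Your $n=2$ ``obstacle'' is only a double count of $\mu$, since $\mu I+y_n$ already ranges over $C_{\mathfrak{sp}_{2n}}(x_n)\ni I$. So your first refinement is exactly right: it gives $\dim B\le \dim\mathfrak{sp}_{2n}+n+1<\dim\mathfrak{sp}_{2n}+2n$ for all $n\ge 2$, which is the paper's bound.
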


Write $V=\bigoplus_{i=1}^n V_i$, where each $V_i$ is a symplectic space and $\dim (V_i)=2$. Now, inside each factor $\mathfrak{sp}(V_i)$, the centralizer of a generic pair $(u_i,v_i) \in \mathfrak{a}_0 \times \mathfrak{a}_0$ is precisely $\mathfrak{a}_0$. Indeed, $\mathfrak{a}_0$
is abelian and any $x \in \mathfrak{sp}(V_i)$ that commutes with two linearly independent elements in $\mathfrak{a}_0$ must commute with all of $\mathfrak{a}_0$. Thus, $x \in \mathfrak{a}_0$ since $\dim \mathfrak{sp}(V_i)=3$ and $\mathfrak{a}_0$ is already a maximal commutative Lie subalgebra in this $3$-dimensional Lie algebra. For the global generic centralizer, consider a pair $(u,v) \in \mathfrak{a} \times \mathfrak{a}$ with components $(u_i,v_i) \in \mathfrak{a}_0 \times \mathfrak{a}_0$, all chosen generically. Since $\mathfrak{a}$ is block diagonal, we have $$C_{\mathfrak{g}} (u,v) = \bigoplus_{i=1}^n C_{\mathfrak{sp}(V_i)} (u_i,v_i) = \bigoplus_{i=1}^n \mathfrak{a}_0 = \mathfrak{a}.$$
Let $\mathfrak{a}_{reg} \subseteq \mathfrak{a}$ be the nonempty Zariski open subset on which the above holds. Define $U = G \cdot (\mathfrak{a}_{reg} \times \mathfrak{a}_{reg}) \subseteq \mathcal{C}_2(\mathfrak{sp}_{2n})$. Now consider the action map $\phi : G \times (\mathfrak{a}_{reg} \times \mathfrak{a}_{reg}) \twoheadrightarrow  U \subseteq \mathcal{C}_2(\mathfrak{sp}_{2n})$. Using the fibre dimension theorem, we obtain
$$\dim U = \dim G + \dim (\mathfrak{a}_{reg} \times \mathfrak{a}_{reg}) - \dim(\text{generic centralizer}) = \dim \mathfrak{g} + 4n -2n = \dim(\mathfrak{sp}_{2n})+2n.$$
Hence, $\overline{U}=\overline{G \cdot (\mathfrak{a} \times \mathfrak{a})}$ is an irreducible subvariety of $\mathcal{C}_2(\mathfrak{sp}_{2n})$ (by being the image of an irreducible set) of dimension $\dim(\mathfrak{sp}_{2n})+2n$. Since this matches the lower bound proved in Lemma \ref{lowerbound}, $\overline{U}$ is the candidate dense component of $\mathcal{C}_2(\mathfrak{sp}_{2n})$. Indeed, in the following we show that a component of smaller dimension cannot exist.

Consider a pair $(x,y) \in \mathcal{C}_2(\mathfrak{sp}_{2n})$. Using the Jordan-Chevalley decomposition write $x=x_s+x_n$ and $y=y_s+y_n$ with $x_s,y_s$ semisimple, $x_n,y_n$ nilpotent and $[x_s,x_n]=[y_s,y_n]=0$. Now suppose $x_s$ is non-central (so $x_s$ is not a multiple of the identity, since $Z(\mathfrak{sp}_{2n})=k I$ in characteristic $2$). Then (again by Jordan-Chevalley) there exists a polynomial $f \in k[T]$ such that $x_s=f(x)$. So for any $z$ with $[x,z]=0$, it follows that $z$ commutes with any power of $x$, so by linearity it commutes with any polynomial in $x$. Thus,  $[x_s,z]=[f(x),z]=0$. This shows that $C_{\mathfrak{sp}_{2n}}(x) \subseteq C_{\mathfrak{sp}_{2n}}(x_s)$, so the pair $(x,C_{\mathfrak{sp}_{2n}}(x)) \subseteq (x,C_{\mathfrak{sp}_{2n}}(x_s))$.

Claim: $x_s$ has at least $2$ distinct eigenvalues.

Indeed, suppose for a contradiction that $x_s$ has only one eigenvalue. Since $x_s$ is semisimple (so diagonalizable) and it has a single eigenvalue, it must be a scalar, so $x_s=\lambda I$ for some $\lambda \in k$. In characteristic $2$ every multiple of the identity lies in $\mathfrak{sp}(V)$ and is central. Indeed, if $X=\lambda I$ then for all $v,w \in V$ we have
$$\langle Xv,w \rangle + \langle v,Xw \rangle = \lambda \langle v,w \rangle + \lambda \langle v,w \rangle = 2 \lambda \langle v,w \rangle = 0,$$
so $X \in \mathfrak{sp}(V)$ and $[\lambda I,y]=0$ for all $y \in \mathfrak{sp}(V)$. Thus $x_s$ would be central, a contradiction.

Write the eigenspace decomposition for $x_s$ as $V \cong \bigoplus_{\lambda} V_{\lambda}$, where $\restr{x_s}{V_\lambda}=\lambda \cdot id$. We note that distinct eigenspaces are orthogonal. Indeed, suppose $v \in V_\lambda, w\in V_\mu$ we have
$$0=\langle x_s v,w \rangle + \langle v,x_s w \rangle = \lambda \langle v,w \rangle + \mu \langle v,w \rangle = (\lambda+\mu) \langle v,w \rangle,$$
so if $\lambda \neq \mu$ (note $-\mu=\mu$), then $\lambda+\mu \neq 0$ and thus $\langle v,w\rangle=0$. Hence, $\langle V_\lambda,V_\mu \rangle =0$ for $\lambda \neq \mu$.

Let us also note that we have nondegeneracy on each eigenspace. Let $\lambda$ be an eigenvalue of $x_s$ and set $U=V_\lambda$ and $W=\bigoplus_{\mu \neq \lambda} V_\mu$. By the previous argument we have $V=U \perp W$. If $u \in U$ with $\langle u,U\rangle=0$ then we also have $\langle u,W\rangle=0$, so $\langle u,V\rangle=0$. By nondegeneracy on $V$, it follows that $u=0$. So the form restricts nondegenerately to both $U$ and $W$ and we have an orthogonal decomposition of symplectic spaces $V=U\perp W$.

\begin{comment}
    Now, since $x_s$ is semisimple, its full centralizer in $\mathfrak{gl}(V)$ is block-diagonal with respect to the eigenspace decomposition and so $$C_{\mathfrak{gl}(V)}(x_s)=\bigoplus_\lambda \End(V_\lambda).$$
Intersecting with $\mathfrak{sp}(V)$ and using $V=U\perp W$ with each block symplectic, we obtain
$$C_{\mathfrak{sp}(V)}(x_s) = C_{\mathfrak{gl}(V)}(x_s) \cap \mathfrak{sp}(V) \subseteq \mathfrak{sp}(U) \oplus \mathfrak{sp}(W) \subseteq \mathfrak{sp}(V) = \mathfrak{sp}_{2n}.$$
\end{comment}

Case $1$: Assume $y=y_s+y_n$ with $y_s$ non-central.

Choose an eigenspace decomposition for $y_s$, $V \cong \bigoplus_\lambda V_\lambda$ with each $V_\lambda$ non-degenerate. Since $C_\mathfrak{g}(y) \subseteq C_\mathfrak{g}(y_s) = \bigoplus_\lambda \mathfrak{sp}(V_\lambda)$, any element that commutes with $y$ preserves every $V_\lambda$. Since $y_s$ is non-central, it has at least $2$ distinct eigenvalues, so we may arrange the $V_\lambda$ into two proper non-degenerate summands, so $V=W_1 \oplus W_2$ with the $W_i$ orthogonal, proper, non-degenerate. Then 
$$C_\mathfrak{g}(y) \subseteq \mathfrak{sp}(W_1) \oplus \mathfrak{sp}(W_2), C_G(y) \subseteq Sp(W_1) \times Sp(W_2).$$
Let $\mathfrak{a}_i=\mathfrak{a} \cap \mathfrak{sp}(W_i)$.
By the $2$-dimensional analysis and generic centralizer calculation previously, there exist open subsets $\mathfrak{a}_i^{reg} \subset \mathfrak{a_i}$ such that for $(u_i,v_i) \in \mathfrak{a}_i^{reg} \times \mathfrak{a}_i^{reg}$ we have $C_{\mathfrak{sp}(W_i)} = \mathfrak{a}_i$. Now, by induction on $\dim(V)$ (with base case $\dim(V)=2$), the pair $(x,y)$ restricted to $W_i$ lies in $\overline{(\mathfrak{a}_i \times \mathfrak{a}_i)^{Sp(W_i)}}$. Therefore, we have $(x,y) \in \overline{G \cdot (\mathfrak{a} \times \mathfrak{a})}$.

Case $2$: Assume $x$ commutes with a non-central semisimple element $z$.

Without loss of generality, we may assume that $x_s$ and $y_s$ are scalars (indeed, subtract scalars if necessary, commutativity is still preserved). Then $(x,y) \in C_\mathfrak{g}(y) \times \{ y\}$. Consider the set
$y_{\mu,\lambda}=\mu y + \lambda z$ for $\mu,\lambda \in k$. Since $z$ is semisimple and non-central, for all but finitely many $(\mu,\lambda)$ the semisimple part is non-central (indeed, generically it has at least two eigenvalues on some $2$-dimensional summand). Moreover,
$$[x,y_{\mu,\lambda}] = \mu [x,y] + \lambda [x,z]=0.$$
Thus each generic pair $(x,y_{\mu,\lambda})$ is still in the commuting variety, so by Case $1$, $(x,y_{\mu,\lambda}) \in  \overline{G \cdot (\mathfrak{a} \times \mathfrak{a})}$ for generic $(\mu,\lambda)$. Letting $(\mu,\lambda) \rightarrow (1,0)$, we have $(x,y) \in \overline{G \cdot (\mathfrak{a} \times \mathfrak{a})}$.

Case $3$: Suppose neither $x$ nor $y$ commutes with a non-central semisimple element. Then their semisimple parts are scalars, so write
$$x=\lambda I + x_n, y=\mu I+y_n$$ with $x_n,y_n$ nilpotent and $[x_n,y_n]=0$. Let $Y$ be a component of the commuting variety that contains $(x,y)$. Consider the projection $\pi_1:\mathcal{C}_2(\mathfrak{sp}_{2n}) \twoheadrightarrow \mathfrak{sp}_{2n}$. Then $\pi_1(Y) \subseteq (nilpotents+scalars)$.
Suppose $x_n$ is in a given conjugacy class $C$. Consider the morphism 
$$\phi_C: k \times C \rightarrow \mathfrak{sp}_{2n}$$ given by $$(\lambda,u) \mapsto \lambda I +u.$$
Let $S_C=\{ \lambda I+u \mid \lambda \in k, u \in C\} = \phi_C(k \times C)$ be the image. We have $\dim(k\times C)=1+\dim(C)$ and so $\dim(S_C) \leq 1+\dim(C)$. Since there are finitley many nilpotent orbits,
$$\pi_1(Y) \subseteq \bigcup_C S_C$$ and therefore
$$\dim \pi_1(Y) \leq \max_{C} \dim (S_C) \leq \max_C (1+\dim(C)).$$
In particular, for the orbit containing $x_n$ we have $\dim \pi_1(Y) \leq 1+\dim(C)$.

Next, restrict the projection $\pi_1$ to $Y$. For any $x' \in \mathfrak{g}$ we have $$\pi_1^{-1}(x')=\{ (x',y') \mid [x',y']=0\} = C_{\mathfrak{g}}(x').$$
So for $x'=\lambda I+x_n \in S_C$, $\dim(\text{full fibre over } x')=\dim C_{\mathfrak{g}}(x') = \dim C_{\mathfrak{g}}(x_n)$. The fibre of ${\pi_1}_Y$ over $x'$ is $Y \cap (\{x'\} \times \mathfrak{g})$, so it's a closed subvariety of the full fibre and thus
$$\dim(\text{fibre of } {\pi_1}_Y)\leq \dim C_\mathfrak{g}(x_n).$$
By the fibre-dimension theorem, for a dominant map $Y \rightarrow \pi_1(Y)$, there is an open subset of $\pi_1(Y)$ over which fibres have minimal dimension and in particular we obtain
$$\dim(Y)\leq \dim \pi_1(Y) + \dim (\text{generic fibre of } {\pi_1}_Y) \leq \dim  \pi_1(Y) + \dim C_\mathfrak{g}(x_n).$$
Combining with the previous inequality, we obtain
 \begin{align*}
    \dim(Y) &\leq 1+\dim(C)+\dim C_\mathfrak{g}(x_n) \\
    &= 1+\dim(C)+ \dim C_G(x_n) + \Delta(x_n) \\
    &= 1+\Delta(x_n) + \dim(\mathfrak{sp}_{2n}) \\
    &\leq 1+n+\dim(\mathfrak{sp}_{2n}).
\end{align*}
Finally, together with Lemma \ref{lowerbound}, we conclude that $\mathcal{C}_2(\mathfrak{sp}_{2n})$ is irreducible, its only component being $\overline{G \cdot (\mathfrak{a} \times \mathfrak{a})}$.

\begin{remark}
    In the case $n=1$, we have $\mathfrak{sp}_2 \cong \mathfrak{sl}_2$, so the irreducibility of $\mathcal{C}_2(\mathfrak{sp}_{2})$ follows from \cite{mypaper}.
\end{remark}

\section{Commuting nilpotent variety}
In this section we consider the commuting nilpotent variety of $\mathfrak{sp}_{2n}$, namely the set $$\mathcal{C}_2^{\text{nil}}(\mathfrak{sp}_{2n})=\{ (x,y) \in \mathfrak{sp}_{2n} \times \mathfrak{sp}_{2n} \mid x,y \text{ nilpotent, } [x,y]=0\}.$$
First let us consider the parabolic Lie algebra
$$\mathfrak{p}=\biggl\{ \begin{pmatrix} A & B \\ 0 & A^t \end{pmatrix} \Biggl\vert A \in \mathfrak{gl}_n, B \in \Sym_n \biggr\} \subseteq \mathfrak{sp}_{2n}$$
and its commuting nilpotent variety, $\mathcal{C}_2^{\text{nil}}(\mathfrak{p})=\{ (x,y) \in \mathfrak{p} \times \mathfrak{p} \mid x,y \text{ nilpotent, } [x,y]=0\}$. Notice first that an element $\begin{pmatrix} A & B \\ 0 & A^t \end{pmatrix} \in \mathfrak{p}$ is nilpotent if and only if $A$ is nilpotent. Consider a pair of commuting elements in $\mathfrak{p}$,
$$x=\begin{pmatrix} A_1 & B_1 \\ 0 & A_1^t \end{pmatrix}, y=\begin{pmatrix} A_2 & B_2 \\ 0 & A_2^t \end{pmatrix},$$
so we have
$$[x,y] = \begin{pmatrix} A_1A_2-A_2A_1 & A_1B_2+B_1A_2^t-A_2B_1-B_2A_1^t \\ 0 & A_1^1A_2^t-A_2^tA_1^t \end{pmatrix} = \begin{pmatrix} [A_1,A_2] & A_1B_2+B_1A_2^t+A_2B_1+B_2A_1^t \\ 0 & [A_1^t,A_2^t] \end{pmatrix}.$$
Define a map $F_{A_1,A_2}:\Sym_n \times \Sym_n \rightarrow \Sym_n$ by 
$$F_{A_1,A_2}(B_1,B_2) = A_1B_2+B_1A_2^t+A_2B_1+B_2A_1^t.$$
Note that indeed the image lies in $\Sym_n$ since $A_1B_2+B_1A_2^t+A_2B_1+B_2A_1^t$ is symmetric. Moreover, we claim that the image lies in fact in the space of diagonal zero symmetric matrices. Indeed,
$$F_{A_1,A_2}(B_1,B_2) = A_1B_2+B_1A_2^t+A_2B_1+B_2A_1^t = A_1B_2+(A_1B_2)^t + B_1A_2^t + (B_1A_2^t)^t$$
and in characteristic $2$ we have $diag(M+M^t)=0$ for any $M$, so it follows that $diag(F_{A_1,A_2}(B_1,B_2))=0$. Therefore, 
$$F_{A_1,A_2}(B_1,B_2) \in S=\{ M \in \Sym_n \mid diag(M)=0\},$$
whose dimension is $\dim(S)=\frac{n(n-1)}{2}$.

Next, let us consider the morphism $\phi: \mathcal{C}_2^{\text{nil}}(\mathfrak{p}) \twoheadrightarrow \mathcal{C}_2^{\text{nil}}(\mathfrak{gl}_n)$ given by 
$$\Bigg(x=\begin{pmatrix} A_1 & B_1 \\ 0 & A_1^t \end{pmatrix}, y=\begin{pmatrix} A_2 & B_2 \\ 0 & A_2^t \end{pmatrix} \Bigg) \mapsto (A_1,A_2).$$
By classical results of Baranovsky \cite{Baranovsky} and Premet \cite{Premet}, $\mathcal{C}_2^{\text{nil}}(\mathfrak{gl}_n)$ is irreducible of dimension $n^2-1$. Now fix a point in the codomain $(A_1,A_2) \in \mathcal{C}_2^{\text{nil}}(\mathfrak{gl}_n)$. Its fiber $\phi^{-1}(A_1,A_2)$ consists precisely of the solutions $(B_1,B_2) \in \Sym_n \times \Sym_n$ of the equation $F_{A_1,A_2}(B_1,B_2)=0$, namely the kernel of the linear map $F_{A_1,A_2}:\Sym_n \times \Sym_n \rightarrow S$. Thus, $\phi^{-1}(A_1,A_2)$ is an affine subspace whose dimension is
$$\dim(\phi^{-1}(A_1,A_2))=\dim \ker(F_{A_1,A_2}) = \dim(\Sym_n \times \Sym_n) - \rank(F_{A_1,A_2}).$$
Now, $\dim(\Sym_n \times \Sym_n)=2\frac{n(n+1)}{2}=n(n+1)$ and $F_{A_1,A_2}$ takes values in $S$, so $\dim(\rank(F_{A_1,A_2})) \leq \dim(S)=\frac{n(n-1)}{2}$. Hence, for any $(A_1,A_2)$,
$$\dim(\phi^{-1}(A_1,A_2)) \geq n(n+1)-\frac{n(n-1)}{2} = \frac{n(n+3)}{2}.$$
Next, applying the fibre dimension theorem to $\phi$, we obtain that for any irreducible component $Y$ of $\mathcal{C}_2^{\text{nil}}(\mathfrak{p})$,
$$\dim(Y) \geq \dim(\mathcal{C}_2^{\text{nil}}(\mathfrak{gl}_n)) +  \frac{n(n+3)}{2}= (n^2-1)+\frac{n(n+3)}{2}.$$
Now let $X$ be an irreducible component of $\mathcal{C}_2^{\text{nil}}(\mathfrak{sp}_{2n})$, $Y$ an irreducible component of $\mathcal{C}_2^{\text{nil}}(\mathfrak{p})$ and let
$$Q=\biggl\{ \begin{pmatrix} I & 0 \\ \alpha & I \end{pmatrix} \Biggl\vert \alpha \text{ invertible} \biggr\} \subseteq Sp_{2n}$$
and consider the morphism $\psi:Q \times Y \rightarrow X$ given by conjugation
$$(g,(y_1,y_2)) \mapsto (gy_1g^{-1},gy_2g^{-1}).$$

\begin{lemma}
    The map $\psi$ is generically finite.
\end{lemma}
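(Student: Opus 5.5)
The approach is to describe the fibres of $\psi$ explicitly and show that over a dense open subset of the image they are finite. Note first that for $\begin{pmatrix} I & 0 \\ \alpha & I \end{pmatrix}$ to lie in $Sp_{2n}$ one needs $\alpha\in\Sym_n$. So $Q$ is an open subset of the abelian unipotent radical
$$U^-=\biggl\{ h_\beta=\begin{pmatrix} I & 0 \\ \beta & I \end{pmatrix} \Biggl\vert \beta\in\Sym_n \biggr\},$$
and $h_\beta h_{\beta'}=h_{\beta+\beta'}$.

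Suppose $(g,(y_1,y_2))$ and $(g',(y_1',y_2'))$ have the same image under $\psi$. Then $h=g'^{-1}g=h_\beta\in U^-$ satisfies $hy_ih^{-1}=y_i'\in\mathfrak{p}$ for $i=1,2$. Conversely, each such $\beta$, together with $g'=gh_\beta^{-1}$, gives a point of the fibre, provided that $g'\in Q$ and that the conjugated pair still lies in $Y$. Hence the fibre through $(g,(y_1,y_2))$ injects into
$$\Sigma(y_1,y_2)=\{\beta\in\Sym_n \mid h_\beta y_i h_\beta^{-1}\in\mathfrak{p},\ i=1,2\}.$$
For $y_i=\begin{pmatrix} A_i & B_i \\ 0 & A_i^t \end{pmatrix}$ a direct block computation in characteristic $2$ shows that the lower-left block of $h_\beta y_i h_\beta^{-1}$ is $\beta A_i + A_i^t\beta + \beta B_i\beta$. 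So
$$\Sigma(y_1,y_2)=\{\beta\in\Sym_n \mid \beta A_i + A_i^t\beta + \beta B_i\beta=0,\ i=1,2\},$$
a closed subvariety of $\Sym_n$ cut out by quadratic equations, which always contains $\beta=0$.

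It therefore suffices to show that $\Sigma(y_1,y_2)$ is finite for $(y_1,y_2)$ in a dense open subset of $Y$. By upper semicontinuity of fibre dimension applied to the incidence variety $\{(\beta,(y_1,y_2))\}\subseteq \Sym_n\times Y$, it is enough to exhibit a single point of $Y$ with $\Sigma$ finite. Since $\Sigma$ is cut out by quadratics through $0$, a natural first check is the Zariski tangent space at $\beta=0$. This is the joint solution space of the linear equations $\beta A_i + A_i^t\beta=0$. For $A_1$ a regular nilpotent $J_n$, the recursion of Case 3 of Lemma \ref{discrepancy} shows this space has dimension at most $n$, and adding $A_2$ cuts it down further. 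This tangent computation only controls the component of $\Sigma$ through $0$, however, so one must also rule out positive-dimensional components of $\Sigma$ away from the origin. For these I would use the quadratic terms $\beta B_i\beta$, and I would try to choose the generic point so that $B_1$ is invertible and the $A_i$ are regular or generic in $\mathcal{C}_2^{\text{nil}}(\mathfrak{gl}_n)$. The image of $\phi$ restricted to $Y$ should contain such points, by the irreducibility of $\mathcal{C}_2^{\text{nil}}(\mathfrak{gl}_n)$ together with the lower bound on fibre dimensions established above.

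I expect the main obstacle to be this genericity step: proving that an \emph{arbitrary} component $Y$, rather than just some component, contains a point with finite $\Sigma$. The difficulty is that in characteristic $2$ the quadratic term $\beta B\beta=0$ does not force $\beta=0$ even for invertible $B$, because $B$ may have isotropic subspaces. What I would try first is to handle the linear and quadratic parts together. Replacing $\beta$ by $t\beta$ and comparing coefficients of $t$ and $t^2$ shows that a line of solutions through $0$ would need $\beta A_i + A_i^t\beta=0$ and $\beta B_i\beta=0$ simultaneously. For $A_1=J_n$ the first condition forces $\beta$ into the explicit $n$-dimensional Hankel-type space, and on that space I would check directly that $\beta B_1\beta=0$, $\beta B_2\beta=0$ have only the solution $\beta=0$ for generic $B_i$. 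Components of $\Sigma$ not through the origin would be handled by translating: conjugating by $h_\beta$ replaces $(y_1,y_2)$ by another point of $\mathfrak{p}\times\mathfrak{p}$ and moves $\beta$ to $0$, so the same argument applies at every point of $\Sigma$.
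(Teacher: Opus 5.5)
Your description of the fibres is the right one, and it is sharper than the paper's. The fibre of $\psi$ through $(g,(y_1,y_2))$ is governed by the transporter $\Sigma(y_1,y_2)$. The paper's conditions $A'\alpha=0$, $\alpha A+\alpha A'\alpha+A^t\alpha=0,\dots$ instead encode $gy_ig^{-1}=y_i$. They only show that the generic stabilizer in $U^-$ is trivial, which does not bound fibres because $Q$ does not act on $Y$.

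However, the proposal does not close the argument. First, the reduction ``by upper semicontinuity it is enough to exhibit one $y\in Y$ with $\Sigma(y)$ finite'' is not justified. $\Sigma(y)$ lives in the affine space $\Sym_n$, so the projection of the incidence variety to $Y$ is not proper, and $y\mapsto\dim\Sigma(y)$ need not be upper semicontinuous on $Y$. What does work is Chevalley's inequality for $\psi|_{Q\times Y}$ itself: every component of every fibre has dimension at least $\dim(Q\times Y)-\dim\overline{\psi(Q\times Y)}$. The fibre through $(g,y)$ is isomorphic to a locally closed subset of $\Sigma(y)$ containing $0$. So it suffices that $0$ be an \emph{isolated} point of $\Sigma(y)$ for a single $y\in Y$. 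This makes components of $\Sigma$ away from $0$ irrelevant, which is just as well, because your translation argument fails. Conjugating by $h_{\beta_0}$ replaces $A_i$ by $A_i+B_i\beta_0$, which need not be regular, so the generic-point argument cannot be rerun there.

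Second, the local analysis at $0$ only excludes lines $t\beta$. Write $L_i(\beta)=\beta A_i+A_i^t\beta$. A branch $\beta(t)=t^m\beta_m+\cdots$ of a curve in $\Sigma$ forces $\beta_m\in\ker L_1\cap\ker L_2$. But it only forces $(\beta_mB_1\beta_m,\beta_mB_2\beta_m)\in\operatorname{Im}(L_1,L_2)$, not $\beta_mB_i\beta_m=0$. Also, if $A_1$ is regular then $A_2\in k[A_1]$ and $\ker L_1\subseteq\ker L_2$, so $A_2$ cuts nothing further.

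A repair uses the diagonal. In characteristic $2$ the diagonal of $L_i(\beta)$ vanishes identically, while $(\beta B_i\beta)_{jj}=\bigl(\sum_k\sqrt{(B_i)_{kk}}\,\beta_{jk}\bigr)^2$. So set-theoretically $\Sigma(y)\subseteq\{\beta:\beta d_1=0\}$ with $d_1=(\sqrt{(B_1)_{kk}})_k$. Hence the tangent cone at $0$ lies in $\ker L_1\cap\{\beta d_1=0\}$. For $A_1=J_n$ (Hankel kernel) this intersection is zero as soon as the diagonal entry of $B_1$ indexed by the cyclic vector of $J_n$ is nonzero.

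The remaining, genuinely missing step is the one you flagged: that an \emph{arbitrary} component $Y$ contains such a point. Your justification is a non sequitur. The lower bound on fibre dimensions does not prevent a component of $\mathcal{C}_2^{\text{nil}}(\mathfrak{p})$ from lying entirely over the proper closed locus where $F_{A_1,A_2}$ is not surjective. On that locus no element of $\operatorname{span}(A_1,A_2)$ is regular. One needs to show that the preimage of that locus has dimension below the bound $n^2-1+\frac{n(n+3)}{2}$ satisfied by every component. The paper does not supply this either: its perturbation $y_1+\lambda N$ is not shown to commute with $y_2$ or to remain in $Y$.
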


\begin{proof}
    Let $\Bigg( y_1=\begin{pmatrix} A & A' \\ 0 & A^t \end{pmatrix}, y_2=\begin{pmatrix} B & B' \\ 0 & B^t \end{pmatrix} \Bigg)\in Y$ and $q=\begin{pmatrix} I & 0 \\ \alpha & I \end{pmatrix} \in Q$. A computation shows that
    $$\psi(g,(y_1,y_2)) = \Bigg( \begin{pmatrix} A+A' \alpha & A' \\ \alpha A +\alpha A' \alpha+A^t \alpha & (A+A' \alpha)^t \end{pmatrix}, \begin{pmatrix} B+B' \alpha & B' \\ \alpha B +\alpha B' \alpha+B^t \alpha & (B+B' \alpha)^t \end{pmatrix} \Bigg).$$
    Thus, $g$ centralizes the pair $(y_1,y_2)$ if and only if the following relations hold:
    \[ A'\alpha=0
    \]
    \[
        \alpha A + \alpha A' \alpha + A^t \alpha=0 \implies \alpha A+A^t \alpha=0 \implies \alpha A \text{ symmetric}
    \]
    \[
        B'\alpha=0
    \]
    \[
        \alpha B + \alpha B' \alpha + B^t \alpha=0 \implies \alpha B+B^t \alpha=0\implies \alpha B \text{ symmetric}
    \]
    Now let us consider a generic point $(y_1,y_2) \in Y$. One can take $A',B'$ invertible, which is an open condition. Let us also show that the set of such elements is non-empty. We have our element $y_1=\begin{pmatrix} A & A' \\ 0 & A^t \end{pmatrix}$. First, there exists $S \in GL_n(k)$ such that $S^{-1}AS=A^t$. Moreover, one can choose $S$ to be symmetric (or conjugate $A$ to be symmetric and then take $S=I$). This is a result of Taussky-Zassenhaus \cite{Taussky-Zassenhaus} for which we will provide a different proof ...... Then the element $N=\begin{pmatrix} 0 & S \\ 0 & 0 \end{pmatrix} \in \mathfrak{sp}_{2n}$ commutes with $y_1$ since $AS=SA^t$. Therefore, $N$ also commutes with $y_1$ up to changing the upper-right block, namely it commutes with anything of the form $y_1+\lambda N = \begin{pmatrix} A & A'+\lambda S \\ 0 & A^t \end{pmatrix}$. Next, consider the polynomial $f(\lambda)=\det(A'+ \lambda S)$. This has degree $n$ and its leading term is $\det(\lambda S)=\lambda^n \det(S)$, which is not identically $0$ since $S$ is invertible. Hence $f(\lambda)\neq 0$ for all but finitely many values of $\lambda$, so for generic $\lambda$, $A'+\lambda S$ is invertible. Notice that the exact same argument works for $B'$.

    By the above, there is an open dense subset $Y' \subset Y$ where $A',B'$ are invertible. Using the above equations, we see that we must have $\alpha=0$, so the centralizer of any pair $(y_1,y_2) \in Y'$ must be trivial. Therefore, over the open subset $\psi(Q \times Y') \subset X$, each fibre is a single point, so $\psi$ is generically finite.
\end{proof}

\begin{theorem}[Taussky-Zassenhaus]
    Let $k$ be a quadratically closed field of characteristic $2$ and let $A \in M_n(k)$. Then there exists $S \in GL_n(k)$ with $S$ symmetric such that $S^{-1}AS=A^t$.
\end{theorem}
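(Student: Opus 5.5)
The plan is to reduce to a canonical form for $A$, exhibit a symmetric intertwiner for each block explicitly, and transport it back by a congruence, which preserves symmetry. In the setting of the paper $k$ is algebraically closed, so we may take Jordan normal form. For a field that is only quadratically closed, the same argument runs with the rational canonical form (direct sum of companion matrices) instead.

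\textbf{Step 1: a single Jordan block.} Let $J=J_m(\lambda)$ and let $P$ be the $m\times m$ reversal matrix, $P_{ij}=\delta_{i+j,m+1}$. Then $P$ is symmetric and $P^2=I$. Conjugating by $P$ reverses both the row and column order, so the superdiagonal becomes the subdiagonal: $PJP=J^t$, i.e.\ $JP=PJ^t$. For a block-diagonal matrix $J=\bigoplus_i J_{m_i}(\lambda_i)$, the matrix $P=\bigoplus_i P_{m_i}$ is still symmetric and invertible, and satisfies $JP=PJ^t$ blockwise.

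\textbf{Step 2: transport to arbitrary $A$.} Write $A=TJT^{-1}$ with $J$ in Jordan form, and set $S=TPT^t$. This $S$ is invertible, and it is symmetric because $S^t=TP^tT^t=S$. We then have
\[
AS=TJT^{-1}TPT^t=TJPT^t=TPJ^tT^t=TPT^t\,T^{-t}J^tT^t=S A^t .
\]
Hence $S^{-1}AS=A^t$. Nothing here uses the characteristic.

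\textbf{Step 3: the quadratically closed case.} If one wants the statement exactly as stated, without algebraic closure, replace the Jordan form by the rational canonical form $A=T\bigl(\bigoplus_i C_{f_i}\bigr)T^{-1}$ with companion matrices $C_f$. The main point to check is that each companion matrix $C=C_f$ admits a symmetric invertible $H$ with $CH=HC^t$.

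The first candidate to try is a Hankel matrix $H$ (entries $h_{i+j}$), chosen as follows. Let $e$ be a cyclic vector for $C^t$ and set $H_{ij}=\langle C^{i-1}e_1,\,(C^{t})^{j-1}e\rangle$-type moments of the linear recurrence defined by $f$, in the standard Hankel symmetrizer construction.

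Its invertibility should follow from the cyclicity of $C$: the associated bilinear form $(p,q)\mapsto \lambda(pq)$ on $k[t]/(f)$ is nondegenerate for a suitable functional $\lambda$, for instance the coefficient of $t^{\deg f-1}$. Since $k[t]/(f)$ is commutative, multiplication by $t$ is self-adjoint for this form. This gives $CH=HC^t$ with $H$ symmetric.

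I expect this Hankel/companion step to be the only real obstacle, namely verifying the nondegeneracy of the trace-like form. Over an algebraically closed $k$, which is all the paper needs for the preceding lemma, Steps 1 and 2 already finish the proof.
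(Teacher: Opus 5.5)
Your proof is correct, and it takes a genuinely different route from the paper. The paper writes $A=ST$ with $S,T$ symmetric and $S$ invertible (its $T$, not your change-of-basis matrix), and then computes $S^{-1}AS=TS=T^{t}S^{t}=(ST)^{t}=A^{t}$. The intermediate step $S=XX^{t}$ plays no role in that conclusion. It is also not valid in general: for an alternating $S$ such as $\begin{pmatrix}0&1\\1&0\end{pmatrix}$ one has $v^{t}Sv\equiv 0$, whereas $v^{t}XX^{t}v=\bigl(\sum_i (X^{t}v)_i\bigr)^{2}$ is not identically zero. More to the point, the paper asserts the factorization without proof, and it is equivalent to the theorem: if $S$ is symmetric with $S^{-1}AS=A^{t}$, then $S^{-1}A$ is symmetric. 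So the paper's argument only reformulates the statement via a classical input going back to Frobenius. Your construction actually produces the symmetric intertwiner: reversal matrices on the Jordan blocks, assembled block-diagonally and transported by the congruence $S=TPT^{t}$. Steps 1--2 are complete for algebraically closed $k$, which is the setting in which the paper uses the result.

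Your Step 3 also goes through, over any field, once you drop the vague ``moments'' sentence and use the form directly. Let $d=\deg f$ and let $\lambda$ be the coefficient of $t^{d-1}$. The Gram matrix $G_{ij}=\lambda(t^{i+j})$, $0\le i,j\le d-1$, is Hankel with $G_{ij}=\delta_{i+j,d-1}$ for $i+j\le d-1$. It is therefore anti-triangular with ones on the anti-diagonal and $\det G=\pm1$, which is the nondegeneracy you flagged. If $M$ is the matrix of multiplication by $t$ in the basis $1,t,\dots,t^{d-1}$, self-adjointness reads $GM=M^{t}G$. Hence $H=G^{-1}$ satisfies $MH=HM^{t}$ (take $H=G$ for the transposed companion convention).

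Your route therefore shows the quadratic-closure hypothesis is unnecessary, in line with the original Taussky--Zassenhaus theorem. The paper's route buys brevity and a direct link to the next lemma (similarity of $A$ to a symmetric matrix), but only modulo the unproved factorization.
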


\begin{proof}
    First of all, there exist symmetric matrices $S,T \in M_n(k)$ with $S$ invertible such that $A=ST$. Next, since $k$ is quadratically closed, every invertible symmetric matrix over $k$ is a Gram matrix (equivalently, over a quadratically closed field of characteristic $2$, every nondegenerate symmetric bilinear form is congruent to the standard one). Thus, we may write $S=XX^t$ for some $X \in GL_n(k)$. So we have $A=ST=XX^tT$. Conjugating this by $X$ we obtain 
    $$X^{-1}AX=X^{-1}(XX^tT)X=X^tTX.$$
    Set $B=X^tTX$ and note that $$B^t=(X^tTX)^t=X^tT^tX=X^tTX=B,$$
    so $B$ is symmetric. Now, since $A=ST$, we note that
    $$A^t=(ST)^t=T^tS^t=TS$$ and then conjugating $A$ by $S$,
    $$S^{-1}AS=S^{-1}(ST)S = TS=A^t,$$
    so $S$ conjugates $A$ to $A^t$ and $S$ is symmetric and invertible by construction.
\end{proof}

\begin{lemma}
     Let $k$ be a quadratically closed field of characteristic $2$ and let $A \in M_n(k)$. Then there exists $S \in GL_n(K)$ such that $S^{-1}AS$ is symmetric.
\end{lemma}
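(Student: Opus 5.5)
To prove that every $A \in M_n(k)$ is similar to a symmetric matrix, I will reduce to Jordan blocks. First assume $k$ is algebraically closed, as in the rest of the paper; I return to the merely quadratically closed case at the end. Any $A$ is conjugate to a block-diagonal sum of Jordan blocks $\lambda I_m + J_m$. A block-diagonal matrix with symmetric blocks is symmetric. Hence it suffices to find, for each $m$, a symmetric matrix similar to $J_m$, since adding $\lambda I_m$ preserves symmetry. So the key step is to exhibit a symmetric nilpotent $m\times m$ matrix $N$ with $N^{m-1}\neq 0$.

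For the construction I would imitate the classical complex symmetric Jordan form: $N=\tfrac12\big((J+J^t) + i(J - J^t)\big)$ and its variants. Both $\tfrac12$ and $i$ with $i^2=-1$ are unusable in characteristic $2$, so I instead look for the form of a symmetric $N$ that can be computed directly.

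The route I would try first is the following. Use the symmetric, invertible exchange matrix $E$ (ones on the antidiagonal). Since $J^t = EJE$, the matrix $EJ$ is symmetric; equivalently $J = E\cdot(EJ)$ is a product of two symmetric matrices with the first invertible. I want $N = X^{-1}JX$ to be symmetric, which holds iff $X^tX^{-t}\dots$; more precisely, the condition is $JXX^t = XX^tJ^t$. So it suffices to write $E = XX^t$ with $X$ invertible, and then $X^{-1}JX$ is symmetric. This is exactly the Gram-matrix fact already used in the proof of the Taussky--Zassenhaus theorem. Over a quadratically closed field of characteristic $2$, a nondegenerate symmetric bilinear form is congruent to the identity provided it is \emph{not alternating}, that is, provided some diagonal entry is nonzero.

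This is the main obstacle. The matrix $E$ has a nonzero diagonal entry only when $m$ is odd, so for even $m$ the form $E$ is alternating and cannot be written as $XX^t$. For even $m$ I would replace $E$ by another symmetric invertible $S$ with $JS = SJ^t$. Such $S$ have the form $S = p(J)E$ for a polynomial $p$ with $p(0)\neq 0$. Taking $p(J) = I + J^{m-1}$ gives $S = E + J^{m-1}E$. Here $J^{m-1}E$ is a matrix unit on the diagonal (the $(m,m)$ or $(1,1)$ position, depending on conventions), so $S$ is symmetric, invertible, and non-alternating. Then $S = XX^t$ and $X^{-1}JX$ is symmetric as before. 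The check that $JS=SJ^t$ reduces to $J^t=EJE$ together with $J$ commuting with $p(J)$.

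Finally, I would remark that the Gram decomposition needs only square roots, i.e. quadratic closure. So the only place algebraic closure enters is the Jordan decomposition. For a merely quadratically closed $k$, I would instead apply the same argument to companion blocks of the rational canonical form: for a companion matrix $C$ there is likewise a symmetric invertible non-alternating $S$ with $CS = SC^t$, obtained from a Hankel matrix of $p(C)$.
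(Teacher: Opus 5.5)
Your proof is correct, and it has the same skeleton as the paper's: find an invertible symmetric $S$ with $AS=SA^t$ (equivalently $A=ST$ with $T$ symmetric), write $S=XX^t$, and conjugate by $X$. The difference is in the step you flagged as the main obstacle, and there your version is the more careful one.

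The paper takes an \emph{arbitrary} invertible symmetric $S$ and asserts $S=XX^t$. Its justification is that every nondegenerate symmetric form over a quadratically closed field of characteristic $2$ is congruent to the standard one. This fails for alternating $S$. The $i$-th diagonal entry of $XX^t$ is $\sum_j x_{ij}^2=(\sum_j x_{ij})^2$, so a zero diagonal forces $X(1,\dots,1)^t=0$, i.e.\ $X$ singular. Your even-size Jordan block with $S=E$ is exactly such a case.

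Your explicit choices supply the non-alternating $S$ that the paper's argument silently needs: $S=E$ for odd $m$, and $S=(I+J^{m-1})E$ for even $m$, which is $E$ plus a diagonal matrix unit. You also prove the factorization step, via $J=E\cdot(EJ)$ and the Jordan form, which the paper only asserts. The paper's route is shorter and basis-free but incomplete as written. Yours is complete over an algebraically closed field, which is the only case the paper uses later.

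Your final paragraph for merely quadratically closed $k$ is only a sketch. You still need that, for a companion matrix $C$ of $f$, some invertible solution of $CS=SC^t$ is non-alternating. This is true, and the argument is short:
\begin{itemize}
\item Such $S$ are the Gram matrices of the $C^t$-invariant symmetric forms $(u,v)\mapsto\lambda(uv)$, transported to $k^m$ via $(k^m,C^t)\cong k[x]/(f)$, with $\lambda$ a linear functional.
\item Nondegeneracy is a nonempty Zariski-open condition on $\lambda$; take $\lambda$ to be the coefficient of $x^{m-1}$.
\item $\lambda(1)\neq 0$ is also a nonempty open condition, and it makes the form non-alternating.
\item A quadratically closed field is infinite, so both conditions can be met simultaneously.
\end{itemize}
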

\begin{proof}
    First of all, there exist symmetric matrices $S,T \in M_n(k)$ with $S$ invertible such that $A=ST$. Next, we may write $S=XX^t$ for some $X \in GL_n(k)$. So we have $A=ST=XX^tT$. Conjugating this by $X$ we obtain 
    $$X^{-1}AX=X^{-1}(XX^tT)X=X^tTX.$$
    Set $B=X^tTX$ and note that $$B^t=(X^tTX)^t=X^tT^tX=X^tTX=B,$$
    so $B$ is symmetric.
\end{proof}

By the above, we have $\dim(X)=\dim(Q \times Y)$ and $X$ is the closure of $\psi(Q \times Y)$. Now, by the discrepancy argument, we have $\dim(X) \leq \dim(\mathfrak{sp}_{2n}) + n -1$ (MAYBE SAY A BIT MORE HERE).
On the other hand, we have a lower bound given by
\begin{align*}
    \dim(X) &= \dim(Q)+\dim(Y) \\
    &\geq \frac{n(n+1)}{2} + \bigg( (n^2-1)+\frac{n(n+3)}{2} \bigg)\\
    &= 2n^2+2n-1 \\
    &= \dim(\mathfrak{sp}_{2n}) +n-1.
\end{align*}

\printbibliography

@book{liebeck-seitz,
  title={Unipotent and Nilpotent Classes in Simple Algebraic Groups and Lie Algebras},
  author={Liebeck, M.W. and Seitz, G.M.},
  isbn={9780821869208},
  lccn={2011043518},
  series={Mathematical surveys and monographs},
  url={https://books.google.ro/books?id=Th-CAwAAQBAJ},
  year={2012},
  publisher={American Mathematical Society}
}

@article{Motzkin-Taussky,
   author = {Motzkin, T. and Taussky, O.},
    title = "{Pairs of matrices with property L. II}",
  journal = {Trans. Amer. Math. Soc. 80, 387-401},
     year = 1955
}

@article{Richardson,
author = {Richardson, R. W.},
journal = {Compositio Mathematica},
language = {eng},
number = {3},
pages = {311-327},
publisher = {Sijthoff et Noordhoff International Publishers},
title = {Commuting varieties of semisimple Lie algebras and algebraic groups},
url = {http://eudml.org/doc/89407},
volume = {38},
year = {1979},
}

@article{Premet,
author = {Premet, Alexander},
year = {2003},
month = {03},
pages = {},
title = {Nilpotent commuting varieties of reductive Lie algebras},
volume = {154},
journal = {Inventiones Mathematicae},
doi = {10.1007/s00222-003-0315-6}
}

@article{Levy,
title = {Commuting Varieties of Lie Algebras over Fields of Prime Characteristic},
journal = {Journal of Algebra},
volume = {250},
number = {2},
pages = {473-484},
year = {2002},
issn = {0021-8693},
doi = {https://doi.org/10.1006/jabr.2001.9083},
url = {https://www.sciencedirect.com/science/article/pii/S0021869301990830},
author = {Paul Levy}
}

@article{mypaper,
title = {The commuting variety of pgln},
journal = {Journal of Algebra},
volume = {665},
pages = {229-242},
year = {2025},
issn = {0021-8693},
url = {https://www.sciencedirect.com/science/article/pii/S0021869324005945},
author = {Vlad Roman},
}

@article{Baranovsky,
title = {The variety of pairs of commuting nilpotent matrices is irreducible},
journal = {Transformation Groups},
volume = {6},
year = {2001},
doi = {https://doi.org/10.1007/BF01236059},
author = {Baranovsky, V.}
}

@article{Taussky-Zassenhaus,
  title={On the similarity transformation between a matirx and its transpose},
  author={Olga Taussky and Hans Zassenhaus},
  journal={Pacific Journal of Mathematics},
  year={1959},
  volume={9},
  pages={893-896},
  url={https://api.semanticscholar.org/CorpusID:119722390}
}
\end{document}